\documentclass {amsart}

\usepackage{graphicx,enumerate, stmaryrd, color, url}
\usepackage{enumitem}
\usepackage{amsmath}
\usepackage{amssymb}
\usepackage{amsthm}
\usepackage{amsfonts}
\usepackage[hidelinks]{hyperref}
\usepackage[all]{xypic}
\usepackage{tikz-cd}
\usepackage{tikz}
\usepackage{import}

\newtheorem{thm}{Theorem}[section]

\newtheorem{lma}[thm]{Lemma}
\newtheorem{prop}[thm]{Proposition}

\newtheorem{cor}[thm]{Corollary}
\theoremstyle{definition}
\newtheorem{defn}[thm]{Definition}
\newtheorem{rmk}[thm]{Remark}

\newtheorem{ex}[thm]{Example}

\newcommand{\D}{\underline \delta}

\newcommand{\md}{\underline{d}}

\newcommand{\hX}{\widehat{X}}

\newcommand{\hG}{\widehat{G}}

\newcommand{\cA}{\mathcal{A}}
\newcommand{\cX}{\mathcal{X}}

\def\cL{\mathcal{L}}

\newcommand{\trop}{\operatorname{trop}}

\newcommand{\Gr}{\operatorname{Gr}}
\newcommand{\Spec}{\operatorname{Spec}}

\newcommand{\val}{\operatorname{val}}

\newcommand{\cO}{\mathcal{O}}
\newcommand{\cF}{\mathcal{F}}

\newcommand{\Pic}{{\operatorname{{Pic}}}}
\newcommand{\Jac}{{\operatorname{{Jac}}}}
\newcommand{\Div}{{\operatorname{{Div}}}}
\newcommand{\supp}{{\operatorname{{supp}}}}

\newcommand{\Aut}{{\operatorname{{Aut}}}}

\newcommand{\Star}{{\operatorname{{Star}}}}

\newcommand{\an}{\mathrm{an}}

\newcommand{\QQ}{\mathbb{Q}}

\newcommand{\GG}{\mathbb{G}}
\newcommand{\RR}{\mathbb{R}}
\newcommand{\PP}{\mathbb{P}}
\newcommand{\ZZ}{\mathbb{Z}}

\newcommand{\ud}{\underline{d}}
\newcommand{\hud}{\widehat{\underline{d}}}

\newcommand{\Spf}{\operatorname{Spf}}

\title{Compactified Jacobians as Mumford models}

\author{Karl Christ, Sam Payne and Jifeng Shen}

\address[Christ]{$~^1$Department of Mathematics\\
	Ben-Gurion University of the Negev\\P.O.Box 653 \\Beersheba\\ 84105\\  Israel\\ and\\  $~^2$Institute of Algebraic Geometry\\
	Leibniz University Hannover\\ Welfengarten 1 \\30167 Ha\-no\-ver\\  Germany }\email{kchrist@math.uni-hannover.de}

\address[Payne]{Department of Mathematics \\ University of Texas at Austin \\ 2515 Speedway, RLM 8.100\\ Austin, TX 78712} \email{sampayne@utexas.edu}

\address[Shen]{} \email{jif.shen@gmail.com}

\begin{document}

	\begin{abstract} 
		We show that relative compactified Jacobians of one-parameter smoothings of a nodal curve of genus $g$ are Mumford models of the generic fiber. Each such model is given by an admissible polytopal decomposition of the skeleton of the Jacobian. We describe the decompositions corresponding to compactified Jacobians explicitly in terms of the auxiliary stability data and find, in particular, that in degree $g$ there is a unique compactified Jacobian encoding slope stability, and it is induced by the tropical break divisor decomposition.
	\end{abstract} 
	
	\maketitle
	
	\setcounter{tocdepth}{1}
	\tableofcontents
	
	\numberwithin{equation}{subsection}

	\section{Introduction}
	
	Throughout, we work over a complete and algebraically closed rank 1 valued field $K$, with valuation ring $R$.
	Let $\cX$ be a one-parameter smoothing of a nodal curve, by which we mean a flat and proper scheme of relative dimension 1 over $R$ whose generic fiber $\cX_K$ is smooth of genus $g$ and whose special fiber $X$ is reduced with only nodal singularities. There are several well-known constructions of relative compactified Jacobians for $\cX$, i.e., schemes proper over $\Spec R$, with an action of $\Jac(\cX)$, whose generic fiber is $\Pic^d(\cX_K)$ and whose special fiber is a (fine or good) moduli space of sheaves on $X$ that satisfy a suitable stability condition \cite{Ishida78, Caporaso94, Simpson94, Esteves01}. Since these constructions assume that the base is locally Noetherian, we require in addition that $\cX$ is defined over the valuation ring in a discretely valued subfield of $K$.
	
	We focus here on the compactified Jacobians of nodal curves constructed by Oda and Seshadri \cite{OdaSeshadri79} and their extension to the relative setting by Ishida \cite{Ishida78}, where the stability condition for sheaves on the nodal curve $X$ is given by a numerical polarization $\phi$. The numerical polarization is a tuple of real numbers, one for each irreducible component of $X$, that sum to the degree $d$. The resulting construction includes all well-known compactifications for one-parameter smoothings of nodal curves. Indeed, while the constructions of compactified Jacobians by Caporaso \cite{Caporaso94}, Simpson \cite{Simpson94} and Esteves \cite{Esteves01} each allow for families of curves over more general base schemes, in the special case of one-parameter smoothings of nodal curves, the relative compactified Jacobians that they produce are a subset of those arising from Ishida's construction \cite[Remark 2.12]{MeloViviani12}. 
	In particular, for any ample line bundle $H$ on $X$ there is a numerical polarization $\phi^{H,d}$ that encodes slope stability with respect to $H$ (see Remark~\ref{rem:slope stability}).

	We denote by $\overline J_{\cX}(\phi)$ the relative compactified Jacobian corresponding to a numerical polarization $\phi$. The modular properties of the construction ensure that the special fiber $\overline J_{X}(\phi)$ depends only on the nodal curve $X$ and the numerical polarization $\phi$, and not the choice of smoothing.  Our main theorem says that each such compactified Jacobian $\overline J_{\cX}(\phi)$ is a Mumford model of $\Pic^d(\cX_K)$, constructed via nonarchimedean analytic uniformization from a polytopal decomposition of the skeleton of $\Pic^d(\cX_K)^\an$, as in \cite[\S4]{Gubler10}. 
	
	\begin{thm} \label{thm:Mumfordmodels}
		The compactified Jacobian $\overline J_{\cX}(\phi)$ is a Mumford model of $\Pic^d(\cX_K)$.
	\end{thm}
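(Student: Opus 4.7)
The plan is to proceed in three main steps. First, I would recall the setup of Mumford models from \cite[\S4]{Gubler10}: the skeleton $\Sigma$ of $\Pic^d(\cX_K)^{\an}$ is a real torus of dimension equal to the first Betti number of the dual graph of $X$, and every lattice-periodic admissible polytopal decomposition $\Delta$ of $\Sigma$ gives rise to a formal $R$-scheme by gluing toric charts modeled on the cones over the polytopes in $\Delta$ and then descending along the period lattice. This formal model algebraizes to an $R$-scheme, the Mumford model $\cM_\Delta$, whose generic fiber is $\Pic^d(\cX_K)$ and whose special fiber is stratified by the polytopes in a fundamental domain. So Theorem~\ref{thm:Mumfordmodels} reduces to exhibiting, for each Simpson or Esteves compactified Jacobian $\overline{J}$, a polytopal decomposition $\Delta$ of $\Sigma$ such that $\cM_\Delta \cong \overline{J}$.

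Second, I would produce $\Delta$ directly from the auxiliary stability data. The special fiber of $\overline{J}$ is naturally stratified by locally closed pieces indexed by the combinatorial types of stable torsion-free rank one sheaves: in Simpson's case, by multidegrees on partial normalizations that are semistable with respect to the chosen polarization; in Esteves' case, by the multidegrees that are stable with respect to the polarization and the chosen section. These indexing sets lift to points of the cocharacter lattice $N$ whose ambient real vector space, modulo the period lattice, is $\Sigma$. Each stability polytope --- the set of real polarizations making a given multidegree semistable --- dualizes under the polarization pairing to a polytope in $N_{\RR}$, and the collection of these polytopes, translated by the period lattice, is my candidate $\Delta$. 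The combinatorial properties of semistability (boundedness of the number of strata, convexity of the loci in polarization space, compatibility under specialization of stability) translate into the conditions that $\Delta$ is rational, locally finite, lattice-periodic, and covers $\Sigma$, i.e., admissible.

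Third, I would construct an $R$-isomorphism $\cM_\Delta \to \overline{J}$. Both schemes have $\Pic^d(\cX_K)$ as generic fiber, so it suffices to identify their formal completions at each stratum of the special fiber. On the Mumford side, the formal neighborhood of the stratum associated to a polytope $P \in \Delta$ is a known toric formal scheme $\Spf R[\sigma_P^\vee \cap M]^{\wedge}$. On the compactified Jacobian side, the corresponding formal neighborhood is described, via deformation theory of torsion-free rank one sheaves on $X$, by smoothing the nodes at which the sheaf fails to be locally free; the versal parameters at these nodes are exactly the monomial coordinates on the cone $\sigma_P$. Matching these two descriptions node by node, using the choice of polarization (and section, in the Esteves case) to identify which nodes correspond to which facets of $P$, gives the required isomorphism.

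The main obstacle is this third step: passing from a combinatorial/analytic model to a scheme defined by a moduli problem of sheaves. The difficulty is that $\cM_\Delta$ is constructed without reference to sheaves on $X$, so one must canonically manufacture a family of torsion-free rank one sheaves over $\cM_\Delta$ whose classifying map is an isomorphism to $\overline{J}$. My expectation is that this family can be built from the universal line bundle on the generic fiber by extending it across each toric chart of $\cM_\Delta$ using the explicit combinatorics of $\Delta$ --- with the polytope structure dictating exactly at which nodes the extension acquires torsion-free but non-locally-free behavior --- and that verifying the resulting sheaf satisfies the Simpson or Esteves stability condition is precisely where the identification of $\Delta$ with the stability polytope decomposition is used.
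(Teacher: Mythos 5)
Your overall architecture (extract an admissible periodic decomposition of the skeleton from the stability data, match special fibers, then glue local identifications of formal completions) matches the paper's, but your second and third steps each contain a genuine gap. In step two, the decomposition you describe is not the one that works: the correct polytopes are not duals of stability polytopes in polarization space, but the loci $\theta_{S,\ud}\subset\Pic^d(\Gamma)$ of divisor classes of type $(S,\ud)$ for $(S,\ud)$ polystable (resp.\ $v$-quasistable). Concretely these are images of zonotopes $\xi+\sum_{e\in S}\bigl[0,\int_e\bigr]$, i.e.\ orthogonal projections of Delaunay cells, and admissibility is proved by reduction to Oda--Seshadri's Namikawa decompositions (Proposition~\ref{prop:polytopal}), with some care because the periodicity lattice $\mu(H_1(G,\ZZ))$ and the integrality lattice $\Lambda^*\cong H^1(G,\ZZ)$ need not be commensurable when edge lengths are irrational. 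The decomposition of polarization space you invoke enters the paper only in Remark~\ref{rmk:stability_polytopes}, to compare Simpson and Esteves stability; ``dualizing under the polarization pairing'' is not a construction that produces $\Delta_{\ps}$, and convexity of stability loci in polarization space does not by itself give local finiteness, rationality, or periodicity of a decomposition of the skeleton.

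The more serious gap is step three. You propose to match formal neighborhoods via deformation theory of sheaves and to manufacture a family of torsion-free sheaves over the Mumford model whose classifying map is an isomorphism; you rightly flag this as the main obstacle, and it is one the paper never has to confront. The Simpson compactified Jacobian is only a good moduli space, so at strictly semistable points its local structure is not the versal deformation space of a single sheaf, and no universal family exists there; your node-by-node matching of versal parameters with toric coordinates would break down exactly on the deeper strata. The key idea you are missing is to avoid sheaves on the Mumford model entirely: over each chart $U_{Q,\alpha}$, both integral models are formal models with \emph{reduced} special fiber of an affinoid subdomain of $\Pic^d(\cX_K)^\an$, hence each is the \emph{canonical} model (the formal spectrum of power-bounded functions) of its generic fiber. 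It then suffices to check that the two affinoids have the same points, which is exactly Proposition~\ref{prop:extension-tf}: a line bundle $\cO(D)$ on $\cX_K$ extends to a torsion-free sheaf of type $(S,\ud)$ on $\cX$ if and only if $\trop(D)$ is equivalent to a divisor of type $(S,\ud)$, i.e.\ tropicalizes into the corresponding zonotope. This stratum-by-stratum comparison of point sets, combined with canonicity of models with reduced special fiber and a separate identification of the special fibers via Oda--Seshadri, yields the isomorphism with no deformation theory and no universal family.
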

	
	\noindent By construction, any Mumford model is \'etale locally isomorphic to a toric variety over the valuation ring $R$, as defined by \cite{GublerSoto15}.  Hence, from the global structure given by Theorem~\ref{thm:Mumfordmodels}, we immediately recover information on the local structure of compactified Jacobians, cf. \cite{Casalaina-MartinKassViviani15}.
	
	\begin{cor}
		The inclusion of $\Pic^d(\cX_K)$ in the compactified Jacobian is toroidal, i.e., it is \'etale locally isomorphic to the inclusion of a split torus over $K$ into a toric variety over $\Spec R$.
	\end{cor}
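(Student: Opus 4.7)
The plan is to combine Theorem~\ref{thm:Mumfordmodels} with the general local structure of Mumford models that is recalled in the paragraph immediately preceding this corollary. Denote the compactified Jacobian by $\overline{J}$. First, Theorem~\ref{thm:Mumfordmodels} provides that $\overline{J}$ is a Mumford model of $\Pic^d(\cX_K)$, associated to some admissible polytopal decomposition of the skeleton $N_\RR/\Lambda$ of $\Pic^d(\cX_K)^{\an}$. By the construction of Mumford models via non-archimedean uniformization \cite[\S4]{Gubler10}, \'etale locally on $\overline{J}$ there is an isomorphism with an affine toric $R$-scheme $Y_\sigma$ in the sense of \cite{GublerSoto15}, where $\sigma \subset N_\RR \oplus \RR_{\geq 0}$ is the cone over a bounded polytope of the decomposition lifted to the universal cover.

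Next, I would identify $\Pic^d(\cX_K)$ \'etale locally with the open split torus of $Y_\sigma$. The key observation is that the cones $\sigma$ appearing in Mumford's construction have bounded slice at height one, so the uniformizer $\pi \in R$ corresponds to the degree-one character in $R[\sigma^\vee \cap (M \oplus \ZZ)]$, and inverting it localizes this monoid algebra to the group algebra $K[M]$. Consequently the generic fiber of $Y_\sigma$ is exactly the open dense split torus $T = \Spec K[M]$, rather than a proper toric $K$-variety. Any \'etale local isomorphism $U \xrightarrow{\sim} Y_\sigma$ therefore restricts on generic fibers to an \'etale isomorphism $U_K \xrightarrow{\sim} T$, and this exhibits the inclusion $\Pic^d(\cX_K) \hookrightarrow \overline{J}$ as \'etale locally modeled by $T \hookrightarrow Y_\sigma$, which is exactly the definition of a toroidal inclusion.

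The only substantive step is the identification of the generic fiber of the local chart with the split torus. This is built into the definition of a Mumford model: it reflects the admissibility of the polytopal decomposition, equivalently the fact that every top-dimensional cone of the associated fan in $N_\RR \oplus \RR_{\geq 0}$ projects surjectively onto $\RR_{\geq 0}$. Because this verification is local and purely combinatorial, I do not expect it to pose a serious obstacle; the real content of the corollary lies in Theorem~\ref{thm:Mumfordmodels} itself, and once that is in hand the toroidality is essentially a translation of the Gubler--Soto local structure theorem for toric $R$-schemes.
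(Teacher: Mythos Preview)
Your proposal is correct and follows essentially the same approach as the paper: the corollary is deduced immediately from Theorem~\ref{thm:Mumfordmodels} together with the fact, recalled just before the statement, that Mumford models are by construction \'etale locally toric varieties over $R$ in the sense of \cite{GublerSoto15}. Your added paragraph explaining why the generic fiber of each local toric chart $Y_\sigma$ is the split torus (because the cones are cones over bounded polytopes at height one) is a helpful elaboration of a point the paper leaves implicit, but it does not change the argument.
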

	
	\begin{cor}
		The compactified Jacobian of a nodal curve is semi-toroidal, i.e., it is \'etale locally isomorphic to a torus invariant closed subvariety of a toric variety.
	\end{cor}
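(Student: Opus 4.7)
The plan is to derive this corollary from the preceding one by passing to the special fiber over $\Spec R$. As recalled in the introduction, the modular properties of the Simpson and Esteves constructions imply that the compactified Jacobian of a nodal curve $X$ depends only on $X$ and is realized as the special fiber of the relative compactified Jacobian $\overline{\mathcal{J}}$ of any one-parameter smoothing $\cX$ of $X$. I would fix such a smoothing and analyze this special fiber.

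By the preceding corollary, the pair $(\Pic^d(\cX_K), \overline{\mathcal{J}})$ is étale locally isomorphic to a pair $(T_K, \cY)$, where $T_K$ is a split torus over $K$ sitting inside a toric variety $\cY$ over $\Spec R$ in the sense of Gubler--Soto. Base-changing an étale neighborhood along the closed point of $\Spec R$ yields an étale local isomorphism between the special fiber of $\overline{\mathcal{J}}$ and $\cY_k$. It therefore suffices to show that the special fiber of any Gubler--Soto toric scheme over $\Spec R$ is, étale locally, a torus-invariant closed subvariety of a toric variety over $k$.

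For this last step I would invoke the structural description of special fibers of toric schemes over valuation rings: for a toric scheme associated to a $\Gamma$-admissible polyhedral decomposition of $N_{\RR}$, the special fiber is a union of torus-invariant closed subvarieties of a toric variety over $k$, with components indexed by the bounded faces of the decomposition. Near any point, only finitely many of these strata meet, and together they form a torus-invariant closed subvariety of a suitable ambient toric variety over $k$, giving the required étale local model.

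The main point requiring care is this last structural result on special fibers of Gubler--Soto toric schemes over possibly non-Noetherian valuation rings; I would cite it from the literature on toric degenerations rather than reproving it. Once this is granted, the rest of the argument is a purely formal chain of étale local isomorphisms, so no further obstacles appear.
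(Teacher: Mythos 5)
Your proposal is correct and follows essentially the same route as the paper, which treats this corollary as an immediate consequence of Theorem~\ref{thm:Mumfordmodels}: a Mumford model is by construction \'etale locally a toric variety over $R$, and its special fiber is therefore \'etale locally a union of torus-invariant strata; the paper makes this explicit in \S\ref{sec:toriccharts} and Proposition~\ref{prop:specialquotient}, where the special fiber is exhibited as $E_{\widetilde\Delta}/\Lambda$, glued from the toric varieties $Y_v$ of the local fans $\Star(v)$. The only nitpick is that the irreducible components of the special fiber are indexed by the \emph{vertices} of the decomposition (the bounded faces index the torus orbits), but this does not affect your argument.
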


	\noindent Each Mumford model of $\Pic^d(\cX_K)$ is determined by a polytopal decomposition of the skeleton of its nonarchimedean analytification $\Pic^d(\cX_K)^\an$. In the proof of Theorem~\ref{thm:Mumfordmodels}, we explicitly specify the polytopal decomposition of the skeleton that gives rise to $\overline J_{\cX}(\phi)$.  The cells in this decomposition have a natural modular interpretation in tropical geometry, as follows. The \emph{type} of a rank 1 torsion free sheaf on the the nodal special fiber is its multidegree $\md$ and the set $S$ of nodes at which the sheaf is not locally free.  Then the cells in the skeleton correspond to the combinatorial types of $\phi$-polystable sheaves and parametrize the classes of tropical divisors given by $\md$ on the vertices of the dual graph, and with one additional point on each of the edges corresponding to nodes in $S$. See \S\ref{sec:stabledecomp} for further details.
 
	The decompositions obtained in this way are generalizations to metric graphs of the Namikawa decompositions for graphs studied by Oda and Seshadri \cite[\S 6]{OdaSeshadri79}. 
	In some cases, we identify these Namikawa decompositions with well-known decompositions that have been previously studied in tropical geometry.
	
	\begin{thm} \label{thm:break}
		The compactified Jacobian $\overline J_{\cX}(\phi^{H,g})$ in degree $g$ is the Mumford model associated to the break divisor decomposition of the skeleton of $\Pic^g(\cX_K)^\an$ for any numerical polarization $\phi^{H,g}$ induced by an ample line bundle $H$.
	\end{thm}

	The affine space of all numerical polarizations of degree $d$ comes with a natural wall and chamber structure; in the resulting polyhedral decomposition, two polarizations are in the relative interior of the same cell if and only if they induce the same stability condition on rank 1 torsion free sheaves \cite[\S 7]{OdaSeshadri79}. 
	 If a numerical polarization $\phi'$ is contained in the closure of the cell containing another polarization $\phi$, we say that $\phi$ specializes to $\phi'$. In this case, there is an induced surjective moduli map $\overline J_{\cX}(\phi) \to \overline J_{\cX}(\phi')$ (Corollary~\ref{cor:map compactified Jacobians}), and the Namikawa decomposition corresponding to $\phi$ is a refinement of the one corresponding to $\phi'$ (Corollary~\ref{cor:refinement}).  Since the moduli map $\overline J_{\cX}(\phi) \to \overline J_{\cX}(\phi')$ and the toroidal blowup corresponding to this refinement are both given by the identity on the general fiber, it follows that they are the same morphism.

	\begin{cor}
		Let $\phi$ be a numerical polarization specializing to $\phi'$.  Then the induced map between compactified Jacobians $\overline J_{\cX}(\phi) \to \overline J_{\cX}(\phi')$ is toroidal, i.e., it is \'etale locally isomorphic to a morphism of toric varieties over the valuation ring.  
	\end{cor}

	\begin{rmk}
		Instances of the main constructions in this paper have also appeared in earlier works on compactified Jacobians, with different language and  notation.  The tropicalization of the Jacobian and linear equivalence of divisors on the dual graph play central roles in \cite{OdaSeshadri79} and \cite{Caporaso94}, respectively. The connection to Mumford models was known previously in the special case where $\cX$ is stable, the polarization is given by the relative canonical divisor, and the degree is $g-1$.  In this situation, the compactified Jacobian is naturally identified with the stable pairs degeneration of $(\Pic^{g-1}(\cX_K), \Theta)$ \cite{Alexeev04}, which is the Mumford model associated to the polytopal decomposition of $\Pic^d(\Gamma)$ induced by the tropical theta divisor \cite{AlexeevNakamura99}.  See also \cite{MolchoWise18} and \cite{AbreuPacini20} for related constructions with logarithmic Picard groups and compactified universal Jacobians, respectively.
	\end{rmk}
	
	The paper is structured as follows: In \S\S\ref{sec:sheaves}-\ref{sec:stability} we briefly review torsion free sheaves on nodal curves and the construction of the compactified Jacobians. We follow the setup of Oda and Seshadri \cite{OdaSeshadri79} and Ishida \cite{Ishida78}; the relation to Simpson's construction is explained in Remark~\ref{rem:slope stability} and to the one of Esteves in Remark~\ref{rmk:v-quasistability}.  In \S\ref{sec:extensions} we discuss the relationship between the tropicalization of divisors on $\cX_K$ and extensions of line bundles to semistable models. In \S\ref{section:decompositions}, we study the Namikawa decompositions of the tropical Jacobian into $\phi$-polystable types, and explain the relation to the tropical theta divisor in degree $g-1$ and the break divisor decomposition in degree $g$. Finally, in \S\ref{sec:Mumford}, we prove Theorem~\ref{thm:Mumford}, relating Mumford models to compactified Jacobians, from which Theorems~\ref{thm:Mumfordmodels} and \ref{thm:break} follow immediately.		
	
	\medskip
	
	\noindent {\bf Acknowledgements.} 
	We benefited from many conversations with friends and colleagues related to this project, and are especially grateful to M. Baker, L. Caporaso, J. Kass, and J. Rabinoff. We thank the  referee for a careful reading and many thoughtful suggestions. KC was partially supported by the Israel Science Foundation (grant No. 821/16).  SP was partially supported by NSF DMS-1702428, NSF DMS-2001502, and NSF DMS-2053261

	\section{Line bundles and torsion free sheaves} \label{sec:sheaves}
	
	Throughout, we fix a projective curve $\cX$ over $\Spec R$ whose generic fiber $\cX_K$ is smooth of genus $g$, and whose special fiber $X$ is reduced and nodal. Since $K$ is complete, $R$ is Henselian and thus there is a section of $\cX \to \Spec R$ through every smooth point in the special fiber $X$.
	
	Let $\Pic^d(\cX)$ be the relative degree $d$ Picard scheme of $\cX$ over $\Spec R$. Its general fiber is $\Pic^d(\cX_K)$, which is a torsor over $\Pic^0(\cX_K)$ via tensor product.  Let $\Jac(\cX) \subset \Pic^0(\cX)$ be the generalized Jacobian, parametrizing degree $0$ line bundles with degree zero on each irreducible component of the special fiber $X$.  Our main purpose is to study the geometry of \emph{compactified Jacobians} of $\cX$ over $\Spec R$, i.e., flat and proper schemes over $\Spec R$ with an action of $\Jac(\cX)$, together with an equivariant identification of the general fiber with $\Pic^d(\cX_K)$.  The special fibers of the compactified Jacobians that we consider parametrize rank 1 torsion free sheaves on the special fiber $X$ that satisfy certain stability conditions.
	
	We follow the usual convention that a \emph{torsion free sheaf} on a reducible nodal curve is a coherent sheaf whose associated points are irreducible components. Such a torsion free sheaf has \emph{rank 1} if the pullback to the normalization of each component, modulo torsion, is a line bundle.  Any line bundle is a rank 1 torsion free sheaf.
	
	\subsection{Multidegrees}
	
	The set of points at which a rank 1 torsion free sheaf is not locally free is a subset of the nodes.  The stability conditions that we consider depend on two data: the set of nodes at which the torsion free sheaf is not locally free, and the \emph{multidegree}, i.e., the tuple of integers given by the degree of the pullback to the normalization of each component, modulo torsion.
	
	\subsection{Dual graphs}
	
	Let $G$ denote the dual graph of the special fiber $X$, with vertex set $V(G)$ corresponding to the irreducible components and edge set $E(G)$ corresponding to the nodes. Note that $G$ may have loops or multiple edges if some irreducible component is singular or if some pair of components intersects at multiple nodes. We write $X_v$ for the irreducible component of $X$ corresponding to a vertex $v$.  
	
	A \emph{divisor} on $G$ is an integer-valued function on the vertices. Let $\ud_v$ denote the value of a divisor $\ud$ on a vertex $v$.  The \emph{degree} of a divisor is then $$\deg(\ud) = \sum_{v\in V(G)} \ud_v.$$

	\subsection{Partial normalizations and blowups} \label{section:normalizations_and_blowups}
	
	We identify the edge set $E(G)$ with the set of nodes of $X$. Let $S \subset E(G)$ be a subset of the nodes.  We write $$\nu\colon X^\nu_S \to X$$ for the partial normalization of $X$ along $S$.  The dual graph of $X^\nu_S$ is $G - S$.  Let $\hX_S$ be the nodal curve obtained from $X^\nu_S$ by attaching a smooth $\PP^1$ through both points in the preimage of each node.  Then $\nu$ extends to a natural projection $$\pi\colon \hX_S \to X,$$ contracting each new component to the corresponding node.  
	
	If $X$ is embedded in a smooth surface, then $\pi$ is the restriction of the blowup of the surface along $S$ to the preimage of $X$.  Similarly, $\nu \colon X^\nu_S \to X$ is the restriction of this blowup to the strict transform of $X$. Based on this relation to blowups, the components contracted by $\pi$ are called \emph{exceptional curves}.  The dual graph $\hG_S$ of $\hX_S$ is obtained from $G$ by subdivision, adding one \emph{exceptional vertex} in the middle of each edge in $S$; the exceptional vertices in $\hG_S$ correspond to the exceptional curves in $\hX_S$.

	\medskip
	
	Torsion free sheaves on $X$ are naturally related to line bundles on partial normalizations, as follows.  Suppose $F$ is a rank $1$ torsion free sheaf on $X$. 
	
	\begin{defn}
		We say that a rank $1$ torsion free sheaf $F$ on $X$ is \emph{of type $(S, \ud)$} if $S$ is the set of nodes at which $F$ is not locally free and $\ud$ is the multidegree of $F$.
	\end{defn}
	
	In particular, if $F$ is a rank $1$ torsion free sheaf of type $(S, \ud)$ its degree is
	\begin{equation*}\label{degree}
		\deg(F) = \deg(\ud) + |S|.
	\end{equation*}
	
	\noindent Let $F$ be of type $(S, \ud)$, and let $\nu^{[*]} F$ denote the pullback of $F$ to $X^\nu_S$, modulo torsion.  Then $\nu^{[*]} F$ is a line bundle of multidegree $\ud$ on $X^\nu_S$, and $F \cong \nu_* \nu^{[*]} F.$  We also associate to $F$ a collection of line bundles on $\hX_S$, as follows.

	The relative projectivization $\PP_X(F)$ is a nodal curve isomorphic to $\hX_S$, and the tautological bundle on $\PP_X(F)$ has degree $1$ on exceptional components \cite[Proposition~5.5]{EstevesPacini16}.  We write $\hud_S$ for the multidegree of the tautological bundle on $\PP_X(F)$, so $\hud_S$ agrees with $\ud$ on the nonexceptional vertices in $\hG_S$, and has degree $1$ on the exceptional vertices.  Then we associate to $F$ the set of line bundles of multidegree $\hud_S$ on $\hX_S$ obtained by pulling back the tautological bundle under isomorphisms $\hX_S \cong \PP_X(F)$ over $X$.

	\begin{rmk}
		The isomorphism between $\hX_S$ and $\PP_X(F)$ over $X$ is not canonical, and the set of choices for this isomorphism is a torsor over $\Aut_X(\hX_S) \cong \GG_m^S$.  To see this, choose an identification of the two preimages of each node in $S$ with $0$ and $\infty$ and let the corresponding $\GG_m$ factor act by rescaling the coordinate of the corresponding exceptional component.  Note that $\Aut_X(\hX_S) \cong \GG_m^S$ also acts transitively on the set of isomorphism classes of line bundles of multidegree $\hud_S$ on $\hX_S$ whose restriction to $X^\nu_S$ is isomorphic to $\nu^{[*]} F$.
	\end{rmk}

	\begin{prop} \label{prop:bijections}
		The maps taking $F$ to $\nu^{[*]}F$ and to the collection of line bundles obtained by pulling back the tautological line bundle under isomorphisms $\hX_S\cong \mathbb{P}_X(F)$ over $X$ induce bijections between
		\begin{enumerate}
			\item isomorphism classes of rank 1 torsion free sheaves of type $(S,\ud)$ on $X$,
			\item isomorphism classes of line bundles of multidegree $\ud$ on $X^\nu_S$, and
			\item $\Aut_X(\hX_S)$-orbits of isomorphism classes of line bundles of multidegree $\hud_S$ on $\hX_S$.
		\end{enumerate}
		
	\end{prop}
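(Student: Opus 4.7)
The plan is to establish the two bijections (1)$\leftrightarrow$(2) and (2)$\leftrightarrow$(3) separately, and then check that the direct map from (1) to (3) via the tautological bundle on $\PP_X(F)$ coincides with the composition. For the first bijection, the maps $F\mapsto \nu^{[*]} F$ and $L\mapsto \nu_*L$ should be mutual inverses. The isomorphism $F\cong \nu_*\nu^{[*]}F$ is already recorded in the discussion above, so it suffices to verify the reverse composition $\nu^{[*]}\nu_*L\cong L$ and that $\nu_*L$ is a rank $1$ torsion free sheaf of type exactly $(S,\ud)$. Both assertions are étale-local, and I would check them on complete local rings at each $p\in S$: here $\widehat{\cO}_{X,p}\cong k[[x,y]]/(xy)$ while $X^\nu_S$ has two branches with stalks $k[[x]]$ and $k[[y]]$, so the stalk of $\nu_*L$ is $k[[x]]\oplus k[[y]]$, which is torsion free of rank $1$ but not locally free, and pulling back modulo torsion recovers each branch separately.

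For the bijection (2)$\leftrightarrow$(3), the natural map is restriction $\hL\mapsto \hL|_{X^\nu_S}$, which descends to $\Aut_X(\hX_S)$-orbits since the action is the identity on $X^\nu_S\subset \hX_S$. Surjectivity is immediate: given a line bundle $L$ of multidegree $\ud$ on $X^\nu_S$, one obtains a lift by gluing $L$ with $\cO_{\PP^1}(1)$ on each exceptional component along any choice of isomorphisms at the $2|S|$ attachment points. Injectivity on orbits is precisely the content of the Remark: if $\hL_1$ and $\hL_2$ both restrict to $L$, then they agree on $X^\nu_S$ and both restrict to $\cO(1)$ on each exceptional component, so they differ only in gluing data. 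Scalar automorphisms of $\cO(1)$ on each exceptional $\PP^1$ absorb one $\GG_m$ per component, and the residual $\GG_m^S$ of independent gluing freedom is realized exactly by the $\Aut_X(\hX_S)\cong \GG_m^S$ action that rescales the coordinate on each exceptional $\PP^1$.

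To conclude, I would verify that the assignment $F\mapsto [\cO_{\PP_X(F)}(1)]$ in (3) agrees with the composite (1)$\to$(2)$\to$(3) constructed above. By the universal property of the projective bundle, the tautological line bundle on $\PP_X(F)\cong \hX_S$ restricts to $\nu^{[*]}F$ on $X^\nu_S$ (this is essentially the cited Proposition~5.5 of Esteves--Pacini), and so by construction lies in the orbit assigned to $\nu^{[*]}F$ under (2)$\to$(3). The main technical obstacle is making precise the transitivity of the $\Aut_X(\hX_S)$-action in the second step, i.e., showing that the coordinate-rescaling action on an exceptional $\PP^1$ produces exactly the residual $\GG_m$ of gluing data after quotienting by automorphisms of $\cO(1)$. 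This amounts to a direct but careful bookkeeping in the Picard groupoid of $\hX_S$ over $X$, the remainder of the argument being routine commutative algebra.
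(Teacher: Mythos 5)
Your proposal is correct and follows essentially the same route as the paper: the paper's proof simply names the inverse maps $L\mapsto\nu_*L$ and $\hL\mapsto\pi_*\hL$ (citing Alexeev for details) and notes afterward that the bijection between (2) and (3) is given by restriction to $X^\nu_S$, which is exactly the composite you verify, together with the transitivity of the $\GG_m^S$-action recorded in the preceding Remark. Your expansion --- the local check at a node that $\nu^{[*]}\nu_*L\cong L$ and the gluing-data bookkeeping for injectivity on orbits --- is a sound filling-in of the details the paper delegates to the references.
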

	
	\begin{proof}
		The inverses take a line bundle $L$ on $X^\nu_S$ (respectively, $\hX_S$) to the torsion free sheaf $\nu_* L$ (respectively, $\pi_* L$). See also \cite[Lemmas 1.5 and 1.9]{Alexeev04}.
	\end{proof}
	
	\noindent The induced bijection between isomorphism classes of line bundles of multidegree $\ud$ on $X^\nu_S$, and $\Aut_X(\hX_S)$-orbits of isomorphism classes of line bundles of multidegree $\hud_S$ on $\hX_S$ is given by restricting a line bundle on $\hX_S$ to the subcurve $X^\nu_S$.
	
	\section{Compactified Jacobians} \label{sec:stability}
	
	In this section, we briefly recall the construction of the compactified Jacobians that we will consider. Over the central fiber $X$, the compactified Jacobian is constructed via a numerical polarization as in the work of Oda and Seshadri \cite{OdaSeshadri79}. Since $\cX$ admits enough sections, this construction naturally extends to the relative setting by work of Ishida \cite{Ishida78}.
	
	\subsection{Polarizations and semistability} 
	A compactified Jacobian over $\cX$ depends on the choice of a \emph{numerical polarization} $\phi$ on the central fiber $X$. That is, a collection of real numbers $\phi = (\phi_v)_{v \in V(G)}$, one for each irreducible component $X_v$ of $X$, such that the sum $\sum_{v \in V(G)} \phi_v$ is an integer $d$. We call $d$ the \emph{degree} of the polarization $\phi$.
	Then $\phi$ determines notions of stability and semistability of rank 1 torsion free sheaves of degree $d$ on $X$, as follows.
	
	For a subcurve $Y \subset X$ let $Y^c$ be the closure of $X \smallsetminus Y$, and $S^c = X^{\mathrm{sing}} \smallsetminus S$. For a rank 1 torsion free sheaf denote by $F_Y$ the maximal torsion free quotient of $F|_Y$. If $F$ has type $(S, \md)$, then the type of $F_Y$ is given by restricting $S$ and $\md$ to $Y$. In particular, we have
	\[
	\deg(F_Y) = \sum_{X_v \subset Y} \ud_v + |Y^{\mathrm{sing}} \cap S|.
	\]
	
	\begin{defn} \label{def:lower bound}
		Let $F$ be a rank 1 torsion free sheaf on $X$ of degree $d$ and type $(S, \ud)$. Then $F$ is called $\phi$-\emph{semistable} if the following inequality holds for all subcurves $Y \subset X$:
		\begin{equation} \label{eq:lower bound}
			\deg(F_Y) \geq \sum_{X_v \subset Y} \phi_v - \frac{|Y \cap Y^c|}{2}.
		\end{equation} 
		We call this inequality the \emph{basic lower bound}. If the inequality is strict for every proper subcurve $Y$, we call $F$ $\phi$-\emph{stable}.
	\end{defn}
	
	\noindent In particular, the $\phi$-semistability or $\phi$-stability of $F$ depends only on $S$ and $\ud$.  Every pair $(S, \ud)$ is the type of a rank 1 torsion free sheaf, and we say that $(S, \ud)$ is $\phi$-semistable or $\phi$-stable if the associated rank 1 torsion free sheaves are so. We say that a sheaf or its type are \emph{strictly $\phi$-semistable} along a proper subcurve $Y$ of $X$ if equality is achieved along $Y$ in the basic lower bound~\eqref{eq:lower bound}.
	
	\begin{rmk} \label{rmk:upper bound}
		Observe that \[\sum_{X_v \subset Y} \phi_v + \sum_{X_v \subset Y^c} \phi_v = d\] and \[\sum_{X_v \subset Y} \ud_v + \sum_{X_v \subset Y^c} \ud_v = d - |S|.\] It follows that the basic lower bound \eqref{eq:lower bound} is satisfied if and only if the inequality
		\begin{equation} \label{eq:bound_complement}
			\deg(F_{Y^c}) \leq \sum_{X_v \subset Y^c} \phi_v - |Y \cap Y^c \cap S|  + \frac{|Y \cap Y^c|}{2}
		\end{equation}
		is satisfied. Furthermore, equality in the basic lower bound \eqref{eq:lower bound} is achieved if and only if equality is achieved in inequality~\eqref{eq:bound_complement}.
		Since $\phi$-stability and $\phi$-semistability require the corresponding inequality to be satisfied on all proper subcurves, one can equivalently consider the \emph{basic upper bound}: 
		\begin{equation} \label{eq:upper bound}
			\deg(F_Y) \leq \sum_{X_v \subset Y} \phi_v - |Y \cap Y^c \cap S|  + \frac{|Y \cap Y^c|}{2}.
		\end{equation} 
Then $F$ is $\phi$-semistable if and only if \eqref{eq:upper bound} holds for all $Y$, and it is $\phi$-stable if and only if this inequality is strict for every proper subcurve.		
	\end{rmk}
	
	\begin{rmk}\label{rmk:shift polarization}
		Let $p$ be a smooth point of $X$ contained in an irreducible component $X_{v}$. Then $F$ is $\phi$-semi\-stable or $\phi$-stable if and only if $F(- p) $ is $(\phi -v)$-semistable or $(\phi -v)$-stable, where $(\phi -v)$ is the numerical polarization of degree $d - 1$ with value $\phi_{v'}$ on vertices different from $v$ and value $\phi_{v} - 1$ on $v$.
	\end{rmk}

	\begin{rmk} \label{rem:slope stability}
		Numerical polarizations were used by Oda and Seshadri in their foundational work on compactified Jacobians of nodal curves  
		\cite{OdaSeshadri79}. While they considered only $d = 0$, this readily translates to any degree, by Remark~\ref{rmk:shift polarization}. We follow the general presentation of numerical polarizations from \cite{MeloViviani12}. The literature on compactified Jacobians includes other encodings of equivalent polarization data; we refer to \cite[\S\S 1-2]{Alexeev04}, \cite[\S 2]{MeloViviani12} and \cite[\S 2]{Casalaina-MartinKassViviani15} for a detailed discussion. We note, in particular, that slope stability can be naturally encoded in terms of a numerical polarization, as follows.  Given an ample line bundle $H$ on $X$, let  
		\begin{equation}\label{eq:slope inequality}
			(\phi^{H,d})_v =  g(X_v) - 1 + \frac{\deg (H|_{X_v})}{\deg (H)} (d + 1 - g) + \frac{|X_v \cap X_v^c|}{2}.
		\end{equation}
Then the numerical polarization $\phi^{H,d}$ of degree $d$ precisely encodes slope stability with respect to $H$, as considered, e.g., in \cite{Simpson94}. Conversely, any numerical polarization $\phi$ with rational coefficients can be realized as such a $\phi^{H,d}$, up to a translation and degree shift as in Remark~\ref{rmk:shift polarization} \cite[Remark 2.9]{Casalaina-MartinKassViviani15}.
	\end{rmk}
	
	Let $S \subset E(G)$ be a subset of the nodes. We extend a numerical polarization $\phi$ on $X$ to a numerical polarization $\widehat{\phi}^S$ on $\hX_S$ by setting its value on exceptional components to zero. We define a numerical polarization $\phi^S$ of degree 
	$d - |S|$ on $X_S^\nu$ by setting 
	\[
	(\phi^S)_v = \phi_v - \frac{|X_v \cap S|}{2}. 
	\]
	Similarly, for a subcurve $Y \subset X$ such that the right hand side of the basic lower bound~\eqref{eq:lower bound}, $\sum_{X_v \subset Y} \phi_v - \frac{|Y \cap Y^c|}{2}$,  is an integer we define a numerical polarization $\phi^Y$ on $Y$ of degree $\sum_{X_v \subset Y} \phi_v - \frac{|Y \cap Y^c|}{2}$ by setting: 
	\[
	(\phi^Y)_v = \phi_v - \frac{|X_v \cap Y^c|}{2}. 
	\]
	
	\begin{rmk} \label{rem:nustable}
		Inserting the definition of $\phi^S$ in the basic lower bound \eqref{eq:lower bound} shows that a rank 1 torsion free sheaf $F$ on $X$ is $\phi$-semistable or $\phi$-stable if and only if $\nu^{[*]} F$ on $X_S^\nu$ is $\phi^S$-semistable or $\phi^S$-stable, respectively.
	\end{rmk}
	
	\begin{lma} \label{lma:connected}
		If a type $(S,\ud)$ is $\phi$-stable then $X_S^\nu$ is connected. 
	\end{lma}
	
	\begin{proof}
		Suppose $X_S^\nu$ is not connected. Then there is a proper subcurve $Y \subset X$ such that $Y \cap Y^c \subset S$. But then the basic lower bound \eqref{eq:lower bound} and the basic upper bound \eqref{eq:upper bound} on $\deg(F_Y)$ coincide, and thus strict inequality is not possible.
	\end{proof}
	
	\subsection{Polystability} \label{section:polystability}
	
	Let $F$ be $\phi$-semistable. A Jordan-H\"older filtration of $F$ is a collection $\{Y_1, \dots, Y_k\}$ of connected subcurves covering $X$ and a filtration
	\[
	0 = F_0 \subsetneq F_1 \subsetneq \dots \subsetneq F_k = F
	\]
	by subsheaves such that the quotient $F_{j}/F_{j-1}$ is a $\phi^{Y_j}$-stable sheaf on $Y_j$. We set 
	\begin{equation}
		\Gr(F) = F_1/F_0 \oplus F_2/F_1 \oplus \dots \oplus F_k/F_{k - 1}.
	\end{equation}
	While the filtration is not unique, the (unordered) tuple $\{Y_1, \dots, Y_k\}$ and the sheaf $\Gr(F)$ are unique.
	
	\begin{defn}\label{def:polystable}
		A $\phi$-semistable sheaf $F$ is $\phi$-\emph{polystable} if it is isomorphic to $\Gr(F)$.  
	\end{defn}
	
	\noindent To each $\phi$-semistable sheaf $F$, we associate the $\phi$-polystable sheaf $\Gr(F)$.	
	
	Note that a $\phi$-stable sheaf $F$ is $\phi$-polystable, since its Jordan-H\"older filtration is trivial, i.e., $F_0 = 0$ and $F_1 = F$. Thus we have the following implications:
	\[ 
	F \mbox{ is } \phi\text{-stable } \Rightarrow F \mbox{ is } \phi \text{-polystable } \Rightarrow F \mbox{ is } \phi\text{-semistable. }
	\]
	
	\begin{lma} \label{lma:polystable_char}
		Let $F$ be a $\phi$-semistable sheaf of type $(S, \ud)$ on $X$ and $\nu\colon X_S^\nu \to X$ the partial normalization. Then the following are equivalent:
		\begin{enumerate}
			\item \label{lma:polystable_char_1} $F$ is $\phi$-polystable,
			\item \label{lma:polystable_char_2} $\nu^{[*]} F$ is $\phi^S$-semistable and strictly  $\phi^S$-semistable only along unions of connected components of $X^\nu_{S}$.
		\end{enumerate}
	\end{lma}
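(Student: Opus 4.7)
The plan is to use Proposition~\ref{prop:bijections} and Remark~\ref{rem:nustable} to translate the statement into the observation that both conditions describe a decomposition of $F$ into stable sheaves indexed by the connected components of $X^\nu_S$.

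Let $Z_1, \dots, Z_k$ be the connected components of $X^\nu_S$ and set $Y_i := \nu(Z_i) \subseteq X$. Since $\nu$ is an isomorphism away from $S$, each $Y_i$ is a connected subcurve, the $Y_i$ together cover $X$, and $Y_i \cap Y_j \subseteq S$ for $i \neq j$. The decomposition $\nu^{[*]}F = \bigoplus_i \nu^{[*]}F|_{Z_i}$ pushes forward via $\nu$ to a direct sum decomposition $F \cong \bigoplus_i F_i$, where $F_i := \nu_*(\nu^{[*]}F|_{Z_i})$ is a rank $1$ torsion free sheaf on $Y_i$ that, under the bijection of Proposition~\ref{prop:bijections}, corresponds to the line bundle $\nu^{[*]}F|_{Z_i}$ on $Z_i$. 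By Remark~\ref{rem:nustable}, $F_i$ is stable (with respect to $H|_{Y_i}$) if and only if $\nu^{[*]}F|_{Z_i}$ is.

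For (2) $\Rightarrow$ (1), assume each $\nu^{[*]}F|_{Z_i}$ is stable. Then each $F_i$ is stable on $Y_i$, and Lemma~\ref{lma:strictly_semistable} applied to $Y_i$ (using $Y_i \cap Y_i^c \subseteq S$) gives $m(F_i) = m(F)$. Taking partial sums $0 \subsetneq F_1 \subsetneq F_1 \oplus F_2 \subsetneq \cdots \subsetneq \bigoplus_i F_i = F$ exhibits a Jordan--H\"older filtration, so $F \cong \Gr(F)$ and $F$ is polystable.

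For (1) $\Rightarrow$ (2), assume $F \cong \bigoplus_j G_j$ with each $G_j$ stable on a connected subcurve $W_j$. Since $F$ has rank $1$ at each generic point of $X$, the subcurves $W_j$ partition the irreducible components of $X$. At a node $p \in W_i \cap W_j$ with $i \neq j$, both $(G_i)_p$ and $(G_j)_p$ are nonzero, so the fiber of $F$ at $p$ has dimension at least $2$, forcing $p \in S$. Setting $S_j := S \cap W_j^{\mathrm{sing}}$, we obtain $X^\nu_S = \bigsqcup_j (W_j)^\nu_{S_j}$, and each $(W_j)^\nu_{S_j}$ is connected by Corollary~\ref{cor:connected} applied to the stable sheaf $G_j$. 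The connected components of $X^\nu_S$ are therefore precisely the $(W_j)^\nu_{S_j}$, and on each, $\nu^{[*]}F$ restricts to the line bundle corresponding to $G_j$, which is stable by Remark~\ref{rem:nustable}. The main obstacle is this second direction: one must verify that any polystable decomposition $F = \bigoplus G_j$ induces exactly the partition of $X^\nu_S$ into its connected components, since a priori $(W_j)^\nu_{S_j}$ could fail to be connected and yield a strictly finer direct sum decomposition of $F$, contradicting the stability of $G_j$; Corollary~\ref{cor:connected} is precisely what rules this out.
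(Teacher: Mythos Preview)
Your proof is correct and follows essentially the same approach as the paper's: both directions hinge on the pushforward decomposition $F \cong \nu_*\nu^{[*]}F = \bigoplus_i F_{Y_i}$, Remark~\ref{rem:nustable} to transfer stability between $F_{Y_i}$ and $\nu^{[*]}F|_{Z_i}$, and Corollary~\ref{cor:connected} to ensure the summands in a polystable decomposition match the connected components of $X^\nu_S$. Your argument is slightly more explicit in places---the fiber-dimension check that $W_i \cap W_j \subseteq S$ and the invocation of Lemma~\ref{lma:strictly_semistable} for the slope equality---whereas the paper packages the first point into the opening observation that $F = F_1 \oplus F_2$ forces $F_1 = F_Y$, $F_2 = F_{Y^c}$ with $|Y \cap Y^c| \subset S$; but these are the same ideas.
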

	
	\begin{proof}		
		Note that we can write $F = F_1 \oplus F_2$ if and only if $F_1 = F_Y$ and $F_2 = F_{Y^c}$ for some proper subcurve $Y$ of $X$ with $|Y \cap Y^c| \subset S$. Note furthermore that using Remark~\ref{rem:nustable} and the additivity of the basic lower bound~\eqref{eq:lower bound} on connected components the assertion in (2) is equivalent to: $F$ is $\phi$-semistable -- which we already assume -- and if it is strictly $\phi$-semistable along a connected subcurve $Y$ of $X$, then $Y \cap Y^c \subset S$.
		
		Suppose $F$ is $\phi$-polystable. Then $F = \bigoplus_i F_{Y_i}$ with $F_{Y_i}$ $\phi^{Y_i}$-stable. Let $Y$ be a connected subcurve of $X$ along which $F$ is strictly $\phi$-semistable, i.e.  
		\begin{equation} \label{eq:strictly semistable}
		    \deg(F_Y) = \sum_{X_v \subset Y} \phi_v  - \frac{|Y \cap Y^c|}{2}. 
		\end{equation}
		Since $\bigoplus_i F_{Y_i}$ is not locally free at $Y_i \cap Y_i^c$, we have $Y_i \cap Y_i^c \subset S$ and thus it suffices to show that $Y$ is a union of the $Y_i$. Since the $Y_i$ cover $X$, there is a $Y_i$ such that $Y \cap Y_i \neq \emptyset$ and we write $Y = W \cup Z$ with $W = Y \cap Y_i$ and $Z = \overline { Y \setminus W}$. We have \[\deg(F_Y) = \deg(F_{W}) + \deg(F_{Z}) + |Z \cap W|\] since $W \cap Z \subset S$. Furthermore, we have \[\sum_{X_v \subset Y} \phi_v = \sum_{X_v \subset W} \phi_v + \sum_{X_v \subset Z} \phi_v\] and \[|Y \cap Y^c| = |W \cap W^c| + |Z \cap Z^c| - 2 |W \cap Z|.\] Inserting these identities in \eqref{eq:strictly semistable} and rearranging terms gives 
		\[
		\left(\deg(F_{W}) - \sum_{X_v \subset W} \phi_v + \frac{|W \cap W^c|}{2}\right) +  \left(\deg(F_{Z}) - \sum_{X_v \subset Z} \phi_v + \frac{|Z \cap Z^c|}{2}\right) = 0.
		\]
		Since $F$ is $\phi$-semistable, this implies that $\deg(F_{W}) - \sum_{X_v \subset W} \phi_v + \frac{|W \cap W^c|}{2} = 0$. Since $\phi_v = (\phi^{Y_i})_v + \frac{|X_v \cap Y_i^c|}{2}$ we obtain that $F_{Y_i}$ is strictly $\phi^{Y_i}$-semistable along $W$ on $Y_i$. Since $F_{Y_i}$ is by assumption $\phi^{Y_i}$-stable on $Y_i$, we get $W = Y_i$, as claimed.   
		
		For the converse, suppose $\nu^{[*]}F$ is $\phi^S$-semistable and strictly  $\phi^S$-semistable only along unions of connected components of $X^\nu_{S}$. Write $Y_i = \nu(C_i)$. Since $F \cong \nu_* \nu^{[*]}F$, we have $F = \bigoplus_i F_{Y_i}$. For any $W \subset Y_i$ along which  $F_{Y_i}$ is strictly $\phi^{Y_i}$-semistable on $Y_i$, the calculation above shows that $F$ is strictly $\phi$-semistable along $W$ on $X$. Thus by assumption $W = Y_i$ and $F_{Y_i}$ is $\phi^{Y_i}$-stable.
		Therefore, $F$ has a Jordan-H\"older filtration with successive quotients $F_{Y_i}$.  Finally, since $F$ is the direct sum of these successive quotients, it is $\phi$-polystable.
	\end{proof}
	
	\noindent By Lemma~\ref{lma:polystable_char}, the $\phi$-polystability of $F$ depends only on $S$ and $\ud$. We say that $(S, \ud)$ is $\phi$-polystable if the associated rank 1 torsion free sheaves are so. 
	
	\begin{prop} \label{prop:type_socle}
		Let $F$ be a $\phi$-semistable rank 1 torsion free sheaf. Then the type of $\Gr(F)$ depends only on the type of $F$. 
	\end{prop}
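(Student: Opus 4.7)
The plan is to reduce via Proposition~\ref{prop:bijections} to a combinatorial statement about multidegrees on partial normalizations of $X$, and then to exhibit $(S', \underline{d}')$ as the output of a canonical iterative procedure that depends only on $(S, \underline{d})$.

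By Proposition~\ref{prop:bijections} and Remark~\ref{rem:nustable}, $F$ corresponds to a semistable line bundle $L = \nu^{[*]}F$ of multidegree $\underline{d}$ on $X^\nu_S$ with respect to $\nu^*H$. By Lemma~\ref{lma:polystable_char}, if $\Gr(F)$ has type $(S', \underline{d}')$, then it corresponds to a line bundle on the further partial normalization $X^\nu_{S'}$ whose restriction to each connected component is stable, so it suffices to exhibit $S'$ and $\underline{d}'$ as numerical functions of $(S, \underline{d})$.

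I construct $(S', \underline{d}')$ by the following iterative procedure. Set $(S^{(0)}, \underline{d}^{(0)}) = (S, \underline{d})$. At step $i$, check whether some connected component $C$ of $X^\nu_{S^{(i)}}$ admits a proper subcurve $Y \subsetneq C$ for which the basic inequality \eqref{basic_upper} holds with equality; by Lemma~\ref{lma:upper_bound}, this is equivalent to $\underline{d}^{(i)}|_C$ being semistable but not stable. If such a $Y$ exists, pick one, set $S^{(i+1)} = S^{(i)} \cup (Y \cap Y^c \cap (S^{(i)})^c)$ with complement taken within $C$, and define $\underline{d}^{(i+1)}$ by subtracting $1$ at each vertex $v$ of $Y$ for each incident edge among the newly added ones, leaving the multidegree on vertices of $Y^c$ unchanged. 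The procedure terminates when no destabilizing subcurve exists (since $|S^{(i)}|$ strictly increases but is bounded), and the output is $(S', \underline{d}') := (S^{(\infty)}, \underline{d}^{(\infty)})$.

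Each step depends only on the current multidegree by Lemma~\ref{lma:upper_bound}, so the final output is determined by $(S, \underline{d})$ \emph{provided} it is independent of the choices of destabilizing $Y$ at each step. This independence is what really requires proof, and I obtain it from the uniqueness of the Jordan--H\"older decomposition asserted in Section~\ref{section:polystability}: each run of the iterative procedure realizes a JH filtration of $F$ in which $F_{i+1}/F_i$ is identified with the stable subsheaf of the quotient supported on the chosen $Y_{i+1}$, and the multidegree adjustment rule ``subtract $1$ on the selected side at each new node'' matches the description in Section~\ref{section:normalizations_and_blowups} of $F_Y$ as obtained from $F|_Y$ by twisting down at boundary nodes outside $S$. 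Since $\Gr(F) = \bigoplus_j F_{j+1}/F_j$ is canonical up to isomorphism, so is its type, and this common type coincides with every possible $(S^{(\infty)}, \underline{d}^{(\infty)})$.

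The main obstacle is the bookkeeping step matching the combinatorial iterative procedure with the abstract JH construction: one must verify that a destabilizing $Y \subsetneq C$ (in the numerical sense of equality in \eqref{basic_upper}) gives rise to a stable subsheaf $F_Y$ of slope $m(F)$ in the corresponding quotient, and that the quotient modulo $F_Y$ remains semistable of the same slope with the predicted multidegree on $Y^c$. Once this correspondence is checked, the proposition follows immediately from JH uniqueness combined with the numerical characterization of (semi)stability in Lemma~\ref{lma:upper_bound}.
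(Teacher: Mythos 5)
Your route is genuinely different from the paper's, and much heavier. The paper reduces to the locally free case (via $\nu^{[*]}F$ and Lemma~\ref{lma:polystable_char}, restricting to connected components of $X^\nu_S$) and then observes that if $F'$ has the same type as $F$, then $F'\otimes F^{-1}$ is a line bundle of multidegree zero, so tensoring carries any Jordan--H\"older filtration of $F$ to one of $F'$ and hence $\Gr(F')\cong \Gr(F)\otimes(F'\otimes F^{-1})$, which has the same type. That two-line argument completely avoids your iterative combinatorial procedure and the need to match it against JH filtrations. Your overall architecture (every run of a purely numerical procedure computes the type of $\Gr(F)$; uniqueness of $\Gr(F)$ for fixed $F$ forces all runs to agree; the set of runs depends only on $(S,\underline d)$) is logically sound, and the multidegree bookkeeping for $F_Y$ versus $F/F_Y\cong F|_{Y^c}$ (twist down on the sub side at the new boundary nodes, leave the quotient side alone) is consistent with \eqref{degree_subsheaf}.

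However, the step you flag as ``the main obstacle'' is both left unverified and misstated, and as stated it is false: a proper subcurve $Y$ achieving equality in \eqref{basic_upper} gives $m(F_Y)=m(F)$, but $F_Y$ need \emph{not} be stable --- it is only semistable of the same slope (take a chain of three components in which both $Y_1$ and $Y_1\cup Y_2$ achieve equality; then $F_{Y_1\cup Y_2}$ is a legitimate choice in your procedure but is strictly semistable). So a single step of your procedure does not produce a term of a JH filtration, and the claim that ``each run realizes a JH filtration'' does not hold literally. What your argument actually needs is the recursive identity $\Gr(F)\cong\Gr(F_Y)\oplus\Gr(F/F_Y)$ whenever $m(F_Y)=m(F)$ (i.e., that any filtration with semistable subquotients of equal slope refines to a JH filtration without changing the associated graded), together with the check that the types of $F_Y$ and $F/F_Y$ are the ones your combinatorial rule outputs. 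Neither is hard, but neither is proved, and the first is not the statement you wrote down. Until that matching lemma is stated correctly and proved, the proof is incomplete; and even once repaired, it is considerably more work than the tensoring argument above.
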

	\begin{proof}
		Using Remark~\ref{rem:nustable} and Lemma~\ref{lma:polystable_char}, we may assume $F$ is locally free by passing to $\nu^{[*]} F$ and restricting to connected components of $X_S^\nu$.  
		
		In this case, suppose $F'$ has the same type as $F$. Any Jordan-H\"older filtration of $F$ gives a Jordan-H\"older filtration of $F'$ by tensoring with $F' \otimes F^{-1}$ since stability depends only on the type of the sheaf. Since $F' \otimes F^{-1}$ has degree zero on each irreducible component, tensoring with $F' \otimes F^{-1}$ does not change the type of $\Gr(F)$ which proves the claim.
	\end{proof}
	
	\subsection{Compactified Jacobians}\label{sec:comp_jac}
	
	Let $\phi$ be a numerical polarization of degree $d$, which determines notions of $\phi$-semistability, $\phi$-stability, and $\phi$-polystability for rank 1 torsion free sheaves on $X$, as explained above. We next describe the resulting compactified Jacobians associated to $\phi$.
	
	\subsubsection{The special fiber}
	
	We write $\overline J_X(\phi)$ for the compactified Jacobian of $X$ associated to the numerical polarization $\phi$, as constructed by Oda and Seshadri \cite{OdaSeshadri79}. 
	
	Closed points in $\overline J_X(\phi)$ are in bijection with isomorphism classes of $\phi$-polystable rank 1 torsion free sheaves of degree $d$ on $X$. We write $[F]$ for the point in $\overline J_X(\phi)$ parametrizing the $\phi$-polystable sheaf $F$. The compactified Jacobian $\overline J_X(\phi)$ is reduced and projective. 
	Furthermore, $\overline J_X(\phi)$ is a good moduli space, in the sense of \cite{Alper13}, for the stack of $\phi$-semistable rank 1 torsion free sheaves of degree $d$ on $X$. The moduli map takes a $\phi$-semistable sheaf $F$ on $X$ to the point $[\Gr(F)]$ of $\overline J_X(\phi)$.
	
	In particular, we have a set theoretic decomposition by type
	\begin{equation} \label{stratification_simpson}
		\overline J_X(\phi) = \bigsqcup_{(S, \ud)} J_X^{(S, \ud)},
	\end{equation}
	where the union runs over $\phi$-polystable types $(S, \ud)$ such that $|S| + \deg(\ud) = d$; the stratum $J_X^{(S, \ud)}$ parametrizes $\phi$-polystable sheaves of type $(S, \ud)$ on $X$.
	
	\subsubsection{The relative setting}
	
Recall that we have fixed a one parameter smoothing $\mathcal X \to \Spec R$. The discussion above generalizes naturally to this relative setting, as follows. We say that a rank 1 torsion free sheaf on $\cX$ is $\phi$-semistable, $\phi$-stable or $\phi$-polystable if its restriction to the special fiber $X$ is so. The proposition below summarizes the results of \cite[\S 5]{Ishida78} that we will use in what follows.
	\begin{prop} \label{prop:relative comp jac}
		For any numerical polarization $\phi$ of degree $d$, there is a projective relative compactified Jacobian $\overline J_\cX(\phi) \to \Spec(R)$ with general fiber $\Pic^d(\cX_K)$ and special fiber $\overline J_X(\phi)$. It is a good moduli space for the stack of $\phi$-semistable torsion free sheaves of rank 1 and degree $d$ on $\mathcal X$.
	\end{prop}
	
	The $\Pic^0(\cX_K)$ action on $\Pic^d(\cX_K)$ extends to an action of the generalized Jacobian $\Jac(\cX)$ on $\overline J_\cX(\phi)$, as follows. A point of $\Jac(\cX)$ corresponds to a line bundle $\mathcal{L}$ on $\cX$ with degree zero on $\cX_K$ and on every irreducible component of $X$. It acts on $\overline J_\cX(\phi)$ by sending a point $[\mathcal{F}] \in \overline J_\cX(\phi)$ to $[\mathcal{F} \otimes \mathcal{L}]$. This action preserves the type $(S, \ud)$ of the restriction of $\mathcal{F}$ to the special fiber, and the action over the special fiber is transitive on the sheaves of each $\phi$-polystable type.
	
	\subsubsection{The wall and chamber decompostion on the space of polarizations}
	\label{sec:varying polarization}
	
	By definition, a numerical polarization of degree $d$ is an element of 
	\[
	H_d = \bigg \{\phi \in \RR^{|V(G)|}\, : \sum_{v \in V(G)} \phi_v = d \bigg \},
	\] and the compactified Jacobians we consider depend on $\phi$. By \cite[\S 7]{OdaSeshadri79} this dependence gives $H_d$ a wall and chamber structure which we will describe next (cf. also \cite[\S 3]{MeloRapagnettaViviani17}).
	
	\medskip
	
	\begin{defn} \label{def:general_polarization}
	    A numerical polarization $\phi$ is \emph{general} if $\sum_{X_v \subset Y} \phi_v - \frac{|Y \cap Y^c|}{2} \not \in \ZZ$ for every proper subcurve $Y$ of $X$.
	\end{defn} 
	\noindent If $\phi$ is general then the notions of $\phi$-semistability, $\phi$-stability and $\phi$-polystability coincide, because equality in the basic lower bound \eqref{eq:lower bound} can never be achieved. By \cite[Corollary 12.15]{OdaSeshadri79}, $\phi$ is general if and only if $\overline J_{X}(\phi)$ is a fine moduli space in the sense that it is isomorphic to the $\mathbb G_m$-rigidification of the stack of $\phi$-semistable torsion free sheaves of rank 1 and degree $d$ on $\mathcal X$.

	\medskip
	
	Different numerical polarizations may induce the same notion of polystability, and hence the same compactified Jacobians. We write  $V(\phi)$ for the subset of $H_d$ of numerical polarizations that define the same notion of polystability as $\phi$. The set of general numerical polarizations is the complement $H_d^\circ$ of the union of all hyperplanes in $H_d$ of the form 
	\[
	H(Y,k) = \bigg \{\phi \, : \, \sum_{X_v \subset Y} \phi_v - \frac{|Y \cap Y^c|}{2} = k  \bigg \}
	\] 
	for integers $k$ and proper subcurves $Y$ such that both $Y$ and $Y^c$ are connected. (To see that it is enough to consider the hyperplanes $H(Y,k)$ such that both $Y$ and $Y^c$ are connected, note that the basic lower bound \eqref{eq:lower bound} is additive on connected components, and one may check $\phi$-semistability and $\phi$-stability equivalently either on $Y$ or $Y^c$, as explained in Remark~\ref{rmk:upper bound}.)
	 Any $V(\phi)$ for $\phi$ general is a connected component of $H_d^\circ$ and the other $V(\phi')$ are the relative interiors of faces of the polytopes $\overline{V(\phi)}$ for general $\phi$ (cf. \cite[Corollary~7.2 and Proposition~7.6]{OdaSeshadri79}). 
	
	\begin{defn}\label{def:dpecialization of polarizations}
		We say that a numerical polarization $\phi$ \emph{specializes to a numerical polarization} $\phi'$ if they have the same degree and $V(\phi') \subset \overline{V(\phi)}$. 
	\end{defn}
	
	\begin{rmk} \label{rmk:v-quasistability}
		One can construct a general numerical polarization $\phi^v$ that specializes to any given polarization $\phi$ via the notion of $v$-quasistability as introduced by Esteves \cite{Esteves01}. Here $v$ is a vertex of $G$ and one may view $v$-quasistability as perturbing $\phi$ infinitesimally in the direction given by $v$ in $H_d \subset \RR^{|V(G)|}$.
		The numerical polarizations obtained in this way are always general \cite[Theorem 2.4 and Remark 2.5]{Esteves09}. Explicitly, $v$-quasistability can be described as follows: a type $(S, \ud)$ is $v$-quasistable, if it is $\phi$-semistable and whenever equality is achieved in the basic lower bound \eqref{eq:lower bound} along a subcurve $Y$ of $X$, then $X_v \subset Y$.
	\end{rmk}
	
	\begin{lma} \label{lma:semistability for varying polarizations}
		Let $\phi$ and $\phi'$ be numerical polarizations of degree $d$ such that $\phi$ specializes to $\phi'$. Then every $\phi$-semistable type is also $\phi'$-semistable.
	\end{lma}
	
	\begin{proof}
		Denote by $V_{\mathrm{ss}}(\phi) \subset H_d$ the set of numerical polarizations $\phi'$ such that every type $(S, \ud)$ that is $\phi$-semistable is $\phi'$-semistable. It suffices to show that $V_{\mathrm{ss}}(\phi)$ is closed, since then $\overline{V(\phi)} \subset V_{\mathrm{ss}}(\phi)$ as claimed. But for any given type $(S, \ud)$ the locus in $H_d$ of polarizations for which it is semistable is closed, since it is a polytope cut out by the finitely many, non-strict inequalities of the basic lower bound \eqref{eq:lower bound}. Then $V_{\mathrm{ss}}(\phi)$ is closed as well, since it is the intersection of finitely many such polytopes, one for each type $(S, \md)$ that is $\phi$-semistable. 
	\end{proof}
	
	\begin{cor} \label{cor:map compactified Jacobians}
		Suppose $\phi$ specializes to $\phi'$. Then the identity map on the general fiber $\Pic^d(\cX_K)$ extends to a surjective morphism between compactified Jacobians \[\overline J_{\cX}(\phi) \to \overline J_{\cX}(\phi').\]
	\end{cor}
	
	\begin{proof}
		By Lemma~\ref{lma:semistability for varying polarizations}, every torsion free rank $1$ sheaf that is $\phi$-semistable is $\phi'$-semistable.  
		Let $W_\phi$ and $W_{\phi'}$ denote the functors of sheaves on $\cX$ that are $\phi$-semistable and $\phi'$-semistable, respectively, as in \cite[\S 4]{Ishida78}. Then we have an inclusion $W_{\phi} \to W_{\phi'}$, and hence a map $\overline J_{\cX}(\phi) \to \overline J_{\cX}(\phi')$ \cite[Theorem 5.6]{Ishida78}. The map is the identity on the generic fiber and has closed image since $\overline J_{\cX}(\phi)$ is proper, thus it is surjective. 
	\end{proof}

	\section{Extensions of line bundles on semistable models} \label{sec:extensions}
	
	Let $\cX_K^\an$ be the nonarchimedean analytification of $\cX_K$, as defined by Berkovich \cite{Berkovich90}. We will recall the skeleton of $\cX_K^\an$ associated to the model $\cX$, and then use this to characterize line bundles on $\cX_K$ that extend to torsion free sheaves of type $(S, \ud)$ on $\cX$.
	For further details on skeletons and semistable models see \cite{BPR-section5}.
	
	\subsection{Skeletons of $\cX_K^\an$ and semistable models} 
	
	Recall that the skeleton of $\cX_K^\an$ associated to the model $\cX \to \Spec(R)$ is a metric graph $\Gamma$, with the dual graph $G$ of the central fiber $X$ as an underlying combinatorial graph. Each edge corresponds to a node of the special fiber, and each node is \'etale locally isomorphic to $xy  = f$ for some $f$ in the maximal ideal of $R$. The length of the edge is then the valuation of $f$, which is the \emph{thickness} of the node.  
		
	A vertex set $V'$ of $\Gamma$ will be a collection of points in $\Gamma$ that are given by the vertices of the dual graph of a semistable model of $\cX_K$ over $\Spec(R)$ that refines our fixed model $\cX$. For a vertex set $V'$ we will write $\cX_{V'}$ for the semistable model having $V'$ as vertex set and $X_{V'}$ for its central fiber.	
	
	The skeleton $\Gamma$ comes with an inclusion $\Gamma \to \cX_K^\an$ and a retraction map 
	\[
	\trop\colon \cX_K^\an \to \Gamma. 
	\]
	This map can be described as follows: a point $x$ of $\cX_K^\an$ is represented by a $K'$-point of $\cX_K$, where $K'/K$ is a valued field extension. Let $\cX_{V'}$ be a semistable model refining $\cX$. Since $\cX_{V'}$ is proper, $x$ extends to an $R'$-point $\Spec(R') \to \cX_{V'}$, where $R'$ is the valuation ring of $K'$. Then for an appropriate choice of semistable model $\cX_{V'}$, the image of the closed point of $\Spec(R')$ lies in the smooth locus of the central fiber, $X_{V'}$. Let $v' \in V'$ be the element of the vertex set such that $X_{v'}$ is the irreducible component of $X_{V'}$ on which this image lies; then $\trop(x) = v'$.
	
	A \emph{divisor} on $\Gamma$ is a finite formal sum $\ud = \sum a_i (p_i)$ with $a_i \in \ZZ$ and $p_i \in \Gamma$, and the degree of a divisor is $\deg(\ud) = \sum a_i$.
	We write $\Div^d(\Gamma)$ for the set of divisors of degree $d$ on $\Gamma$ and $\Div^d(G)$ for the subset supported on vertices of $G$. 
	
	To a piecewise linear function $f$ with integer slopes  on $\Gamma$ we can associate a divisor whose value at a point $p$ is the sum of the outgoing slopes of $f$ at $p$. Divisors obtained in this way are called \emph{principal}. Two divisors $\ud$ and $\ud'$ are said to be linearly equivalent, $\ud \sim \ud'$, if their difference is a principal divisor.  The space $\Pic^d(\Gamma)$ parametrizing classes of divisors of degree $d$, up to linear equivalence, is a torsor over $\Pic^0(\Gamma)$.  Tropicalization gives a natural map
	$
	\trop\colon \Div^d(\cX_K) \to \Div^d(\Gamma),
	$
	which takes algebraic principal divisors to principal divisors on $\Gamma$, and hence induces a map on divisor classes $\Pic^d(\cX_K) \to \Pic^d(\Gamma)$. 
	
	\subsection{Extension of line bundles}
	We now characterize models of $\cX_K$ on which a line bundle $\mathcal{O}(D)$ extends to a line bundle or torsion free sheaf of given type, in terms of $\trop(D)$.  Proposition~\ref{prop:extension-tf} will be essential in the proof of Theorem~\ref{thm:Mumford}.
	
	\begin{lma} \label{lma:extension-lb}
		Let $D$ be a divisor on $\cX_K$, and let $V'$ be a semistable vertex set that contains the support of $\trop(D)$.  Then $\cO(D)$ extends to a line bundle on $\cX' = \cX_{V'}$ whose restriction to $\cX'_v$ has degree $\trop(D)_v$, for each $v \in V'$.
	\end{lma}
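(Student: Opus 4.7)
The approach is to extend $\cO(D)$ by taking the Zariski closure of $D$ inside $\cX'$. Write $D = \sum_i a_i p_i$ with the $p_i$ distinct closed points of $\cX_K$; since $K$ is algebraically closed, each $p_i$ is a $K$-rational point. By properness of $\cX' \to \Spec R$, each $p_i$ extends uniquely to a section $s_i \colon \Spec R \to \cX'$; let $\overline{p_i} \subset \cX'$ denote its image.

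The essential use of the hypothesis $\supp(\trop(D)) \subset V'$ comes next. By the description of the retraction $\trop \colon \cX_K^\an \to \Gamma$ recalled just above, $\trop(p_i)$ is the vertex $v \in V'$ such that the closed point of $s_i$ lies in the smooth locus of $\cX'_v$, provided such a vertex exists; and the hypothesis on $V'$ guarantees that it does. Thus for every $i$ the closed point of $s_i$ lies in the smooth locus of $\cX'_{\trop(p_i)}$, and in particular $\cX'$ is regular at this point. Since $\overline{p_i}$ is a height-one closed subscheme passing through a regular point of the two-dimensional scheme $\cX'$ (and is already Cartier on the generic fiber $\cX_K$), it is Cartier on all of $\cX'$. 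Setting $\overline{D} := \sum_i a_i \overline{p_i}$ therefore yields a Cartier divisor on $\cX'$ whose restriction to $\cX_K$ equals $D$, so $\cO(\overline{D})$ is the desired extension of $\cO(D)$.

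For the multidegree, intersection theory gives
\[
\deg\bigl(\cO(\overline{D})|_{\cX'_v}\bigr) \;=\; \overline{D} \cdot \cX'_v \;=\; \sum_i a_i \,(\overline{p_i} \cdot \cX'_v).
\]
Since $s_i$ is a section, $\overline{p_i}$ meets the central fiber transversely in a single smooth point, which lies on $\cX'_{\trop(p_i)}$ by the paragraph above. Hence $\overline{p_i} \cdot \cX'_v$ equals $1$ if $v = \trop(p_i)$ and $0$ otherwise, and the sum collapses to $\sum_{i \,:\, \trop(p_i) = v} a_i = \trop(D)_v$.

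The main (though mild) subtlety is precisely the Cartier property of $\overline{p_i}$, which hinges on $s_i$ landing in the smooth locus of the central fiber: without the hypothesis that $V'$ contains $\supp(\trop(D))$, some section could pass through a node of $X_{V'}$, where $\cX'$ may fail to be regular and the Zariski closure need not be Cartier. Everything else is formal once this point is in hand.
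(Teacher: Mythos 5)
There is a genuine gap. Your argument rests on the claim that the hypothesis ``$V'$ contains the support of $\trop(D)$'' forces each individual point $p_i$ to tropicalize to a vertex of $V'$. That is not what the hypothesis says: $\trop(D)=\sum_i a_i\,\trop(p_i)$ is a formal sum on $\Gamma$, and its support is computed \emph{after cancellation}. For example, if $D=p-q$ with $\trop(p)=\trop(q)$ an interior point of an edge, then $\trop(D)=0$, so the hypothesis is satisfied for every $V'$, yet both sections $s_p,s_q$ reduce to a node of $X_{V'}$. At such a node $\cX'$ is \'etale locally $xy=f$ with $\val(f)>0$; it is in general not regular there, the Zariski closure of the point need not be Cartier, and $\cO(\overline D)$ need not be a line bundle. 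So your construction only covers the special case where every $p_i$ reduces to a smooth point of the special fiber.

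That special case is exactly the first sentence of the paper's proof (closures of $K$-points tropicalizing into $V'$ are Cartier, with the intersection-theoretic degree count you give). The paper then uses it to \emph{reduce} to the case $\trop(D)=0$: choose an auxiliary divisor $D'$ supported on $K$-points reducing to smooth points with $\trop(D')=\trop(D)$, extend $\cO(D')$ by closures, and handle $\cO(D-D')$ separately. This residual case is where the real content lies: one observes that $\cO(D-D')$ is a $K$-point of the formal completion of $\Jac(\cX')$ along its special fiber and invokes Bosch--L\"utkebohmert to extend it to an $R$-point of the relative Jacobian, i.e.\ to a line bundle on $\cX'$ of multidegree zero. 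Your proposal is missing this entire step, and I don't see an elementary patch within your framework, since a divisor with $\trop(D)=0$ can be supported entirely on points reducing to nodes.
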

	
	\begin{proof}
		If $x$ is a $K$-point in $\cX$ whose tropicalization is contained in $V'$, then the closure of $x$ is a Cartier divisor on $\cX'$.  Hence it suffices to show that if $\trop(D) = 0$ then $\cO(D)$ extends to a line bundle on $\cX'$ with degree zero on each component of the special fiber.
		
		Suppose $\trop(D) = 0$.  Then $\cO(D)$ is a $K$-point of the formal completion $J_0$ of $\Jac(\cX')$ along its special fiber, as in \cite[\S7]{BakerRabinoff15}.  By \cite[Theorem~5.1(c)]{BoschLutkebohmert84} each $K$-point of this formal completion extends to an $R$-point of the relative Jacobian $\Jac(\cX')$, and hence $\cO(D)$ extends to a line bundle on $\cX'$ with degree zero on each component of the special fiber \cite[Theorem~9.3.7]{BLR90}. 
	\end{proof}
	
	\begin{prop} \label{prop:extension-tf}
		The line bundle $\cO(D)$ on $\cX_K$ extends to a torsion free sheaf of type $(S, \ud)$ on $\cX$ if and only if $\trop(D)$ is equivalent to $\ud + \underline e$, where $\underline e$ is the sum of one point in the interior of each edge in $S$.
	\end{prop}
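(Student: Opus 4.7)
The plan is to prove both directions by comparing line bundle extensions of $\cO(D)$ on a common semistable refinement $\cX_{V'}$ whose vertex set $V'$ contains $\supp\trop(D)$. The key combinatorial input is that any two extensions of $\cO(D)$ to $\cX_{V'}$ differ by a line bundle with trivial generic fiber, whose multidegree on the special fiber is a ``twistor'' -- a principal divisor on the dual graph $G_{V'}$ with its inherited edge lengths. Under the standard identification of principal divisors on $G_{V'}$ with principal divisors on $\Gamma$ supported on $V'$, this reduces the proposition to a comparison of multidegrees modulo linear equivalence, with the distinguished extension of multidegree $\trop(D)$ provided by Lemma~\ref{lma:extension-lb}.

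For the forward direction, assume $\cF$ extends $\cO(D)$ as a torsion free sheaf of type $(S,\ud)$ on $\cX$. I would form the projective bundle $\PP_\cX(\cF)$; it is proper and flat over $\Spec R$ with generic fiber $\cX_K$, and its central fiber is $\PP_X(\cF|_X)\cong \hX_S$ by \cite[Proposition~5.5]{EstevesPacini16}, so $\PP_\cX(\cF)$ is a semistable model of $\cX_K$ refining $\cX$. The tautological line bundle $\cL$ on $\PP_\cX(\cF)$ extends $\cO(D)$, and its multidegree $\hud_S$, viewed as a divisor on $\Gamma$ supported on $V(\hG_S)$, is $\ud + \sum_{e\in S}(v_e)$ for the exceptional vertices $v_e$. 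Choosing a semistable vertex set $V'\supset V(\hG_S)\cup \supp\trop(D)$ and pulling $\cL$ back to $\cX_{V'}$, the multidegree as a divisor on $\Gamma$ remains $\ud+\sum_{e\in S}(v_e)$, since further subdivision contributes zero on the new components. Since Lemma~\ref{lma:extension-lb} provides another extension with multidegree $\trop(D)$, the two multidegrees differ by a twistor, which yields $\trop(D)\sim \ud+\underline e$ on $\Gamma$.

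For the converse, assume $\trop(D)\sim\ud+\underline e$. I would choose representatives $v_e$ of the interior points making up $\underline e$, and a semistable vertex set $V'\supset V(G)\cup\{v_e\}_{e\in S}\cup \supp\trop(D)$. Lemma~\ref{lma:extension-lb} yields an extension of $\cO(D)$ to $\cX_{V'}$ with multidegree $\trop(D)$; twisting by the line bundle on $\cX_{V'}$ corresponding to the twistor that realizes $\trop(D)\sim \ud + \underline e$ on $G_{V'}$, I obtain a new extension $\cL$ of multidegree $\ud$ on $V(G)$, multidegree $1$ on each $v_e$, and $0$ on the remaining vertices of $V'$. Its pushforward along the contraction $\cX_{V'}\to\hcX_S$ is a line bundle on $\hcX_S$ of multidegree $\hud_S$ (because $\cL$ has degree zero on every contracted component), and the further pushforward along $\pi\colon\hcX_S\to\cX$ is, by the relative form of Proposition~\ref{prop:bijections}, a rank 1 torsion free sheaf on $\cX$ of type $(S,\ud)$ extending $\cO(D)$.

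The main obstacle is the careful bookkeeping between divisors on the combinatorial graph $G_{V'}$ and divisors on the metric graph $\Gamma$ -- specifically, identifying the multidegrees of line bundles on $\cX_{V'}$ with trivial generic fiber with principal divisors on $G_{V'}$, and in turn with principal divisors on $\Gamma$ supported on $V'$. Both statements are standard in the nonarchimedean framework cited in the text (cf.~\cite{BPR-section5,BakerRabinoff15}), but they underlie the whole argument. A secondary technicality is checking that the pushforwards in the converse direction are flat over $R$ and carry the claimed type on the central fiber; this reduces to the multidegree computation above together with the identification in Proposition~\ref{prop:bijections}.
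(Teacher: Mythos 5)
Your proposal is correct and follows essentially the same route as the paper: the forward direction via \cite[Proposition~5.5]{EstevesPacini16} (equivalently, the relative projectivization $\PP_\cX(\cF)$) combined with Lemma~\ref{lma:extension-lb}, and the converse by producing a line bundle of multidegree $\ud+\underline e$ on a suitable refinement and pushing forward. The only difference is that you make explicit the step the paper leaves implicit---that two extensions to a common model differ by a twister, so their multidegrees agree up to a principal divisor on $\Gamma$---which is indeed the standard fact from \cite{BakerRabinoff15} underlying the paper's ``hence'' in the forward direction.
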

	
	\begin{proof}
		By \cite[Proposition~5.5]{EstevesPacini16}, if a line bundle $L$ on $\cX_K$ extends to a torsion free sheaf of type $(S, \ud)$ on $\cX$ then there is a model $\cX'$ over $\cX$, with precisely one exceptional component over each node in $S$, such that $L$ extends to a line bundle $\cL'$ on $\cX'$ with multidegree $\ud$ on the strict transform of the components of $X$ and degree 1 on each exceptional component.  Such a model corresponds to a semistable vertex set $V'$ consisting of the vertices of $\Gamma$ together with one point in the interior of each edge in $S$. Since $\cX' \to \Spec R$ is projective, we may write $\cL'$ as the difference of two very ample line bundles. Thus, we can write $\cL' = \cO(\mathcal D_1 - \mathcal D_2)$ where the supports of $\mathcal D_1$ and $\mathcal D_2$ do not contain any of the nodes.  Then $D \sim (\mathcal D_1 - \mathcal D_2)_K$ and hence $\trop(D) \sim \trop(\mathcal D_1)_K - \trop(\mathcal D_2)_K = \ud + \underline e$, where $\underline e$ is the sum of the points in $V'$ that are not vertices of $\Gamma$.
		
		For the converse, suppose $\trop(D)$ is equivalent to such a divisor $\ud + \underline e$.  By \cite[Theorem~1.1]{BakerRabinoff15}, $D$ is equivalent to a divisor $D'$ with tropicalization $\ud + \underline e$. Then, by Lemma~\ref{lma:extension-lb}, $L = \mathcal O(D')$ extends to a line bundle $\cL'$ on the model $\cX'$ corresponding to the semistable vertex set $V'$ consisting of the vertices of $\Gamma$ together with the support of $\underline e$, whose restriction to $\cX'_v$ has degree $(\ud + \underline e)_v$.  Hence, the push forward of $\cL'$ to $\cX$ is an extension of $L$ to a torsion free sheaf of type $(S,\ud)$.
	\end{proof}
	
	\begin{defn} \label{def:type_divisor}
		Let $\Gamma$ be a metric graph with underlying graph $G$.  Let $S \subset E(G)$ and $\ud \in \Div(G)$. Then $\D \in \Div(\Gamma)$ is \emph{of type $(S, \ud)$} if $\D = \ud + \underline e$, where $\underline e$ is the sum of one point in the interior of each edge in $S$.
	\end{defn}
	
	We give a convex geometry proof of the following proposition in \S\ref{sec:stabledecomp}, using Namikawa decompositions.  Here we give a short proof via algebraic geometry.
	
	\begin{prop} \label{prop:unique-polystable}
		Let $\D \in \Div(\Gamma)$ and let $\phi$ be a numerical polarization.  Then there is a unique $\phi$-polystable type $(S, \ud)$ such that $\D$ is equivalent to a divisor of type $(S, \ud)$.
	\end{prop}

	\begin{proof}
		After a valued extension of the ground field, we may assume that $\D$ is the tropicalization of a divisor on $\cX_K$.  Existence and uniqueness then follow from Proposition~\ref{prop:extension-tf} and the properness of the compactified Jacobian $\overline J_{\mathcal X}(\phi)$, by the valuative criterion.
	\end{proof}
	
	In the special case of a numerical polarization $\phi^v$ encoding $v$-quasistablilty as in Remark~\ref{rmk:v-quasistability}, Proposition~\ref{prop:unique-polystable} is part of \cite[Theorem 5.6]{AbreuPacini20}, which also proves that the divisor of type $(S,\ud)$ that is equivalent to $\D$ is unique in this case. 
	Note that in general the divisor of type $(S,\ud)$ equivalent to $\D$ is not necessarily unique, as the following example shows. 
	
	\begin{ex} \label{ex:polystable family}
		Let $\Gamma$ be a loop with two vertices $v, w$ of weight one. Assume that both edges have length one. Let $\D$ have degree $0$ on $v$ and degree $2$ on $w$ and define a numerical polarization $\phi$ by $\phi_{v} = \phi_{w} = 1$. Then the type of $\D$, $S = \emptyset$ and $\md = (0,2)$, is $\phi$-semistable but not $\phi$-polystable. The divisors $\D'$ of $\phi$-polystable type that are equivalent to $\D$ have one point in the interior of each edge, both at distance $\epsilon$ from $w$, for $\epsilon \in (0, 1)$, and thus form a one-dimensional family. 
		\begin{figure}[ht]
		    \begin{tikzpicture}[x=0.7pt,y=0.7pt,yscale=-0.8,xscale=0.8]
            \import{./}{figure1.tex}
        \end{tikzpicture}
		    \caption{The one-dimensional family of equivalent divisors in Example~\ref{ex:polystable family}. All divisors are $\phi$-semistable, and the ones in the middle two figures are in addition $\phi$-polystable.}
		    \label{fig:polystable family}
		\end{figure}
		
	\end{ex}
	
	\section{Polytopal decompositions of skeletons}
	\label{section:decompositions}
	
	Let $\Pic^d(\cX_K)^\an$ be the nonarchimedean analytification of $\Pic^d(\cX_K)$ as defined by Berkovich \cite{Berkovich90}. Its skeleton is naturally identified with $\Pic^d(\Gamma)$ \cite{BakerRabinoff15}.  In this section, we describe polytopal decompositions of the skeleton $\Pic^d(\Gamma)$ that are naturally associated to the stability conditions discussed in \S\ref{sec:stability}. We use this to give toric charts on the central fiber of the compactified Jacobian. As an example, we discuss compactified Jacobians $\overline J_{\cX}(\phi^{H,d})$ with $\phi^{H, d}$ a numerical polarization of degree $d = g -1$ or $d = g$ associated to an ample line $H$ as in Remark~\ref{rem:slope stability}, and their relations to the tropical theta divisor and the break divisor decomposition, respectively.  
	
	\begin{rmk}\label{rmk:reduction to degree zero}
		For the remainder of the paper, we fix a basepoint in $\cX(K)$ and use this to identify $\Pic^d(\cX_K)$ with $\Pic^0(\cX_K)$. For simplicity, we assume that the closure of the base point in $\cX$ intersects the central fiber in a smooth point $p \in X_{v'}$. By Remark~\ref{rmk:shift polarization} the identification extends to an identification $\overline J_\cX(\phi)$ with $\overline J_{\cX}(\phi - d p)$. We assume that the tropicalization of the basepoint is a vertex $v' \in V(G)$ and use $v'$ to identify $\Pic^d(\Gamma)$ with $\Pic^0(\Gamma)$. 
	\end{rmk}

	\subsection{Decomposing the tropical Jacobian}
	We briefly recall the theory of tropical Jacobians as developed in \cite{MikhalkinZharkov08, BakerFaber11, BakerRabinoff15} and then define polytopal decompositions given by stability conditions that naturally generalize the Namikawa decompositions for graphs, from \cite[\S6]{OdaSeshadri79}, to metric graphs with arbitrary edge lengths. 
	As before, we set $\Gamma = \trop(\cX_K^{\mathrm{an}})$ and write $G$ for the dual graph of $X$.
	
	\subsubsection{The integration pairing}
	Let $C_0(G, A)$ and $C_1(G, A)$ be the 0- and 1-chains on $G$, with coefficients in $A$, and let $\partial\colon C_1(G, A) \rightarrow C_0(G, A)$ be the boundary map. 
	\begin{rmk}\label{rmk:numerical polarization OS}
		The image of the boundary map $\partial$ consists of formal sums of vertices of $G$ with coefficients in $A$ that sum to $0$.
		In particular, we may view a numerical polarization $\phi$ of degree $d = 0$ as an element in $\partial(C_1(G, \QQ))$.
	\end{rmk}

	Let $H_1(G, \RR)$ be the first homology group of $G$ with real coefficients. It is a vector space of dimension $b_1(G)$, the first Betti number of $G$. Then $$H_1(G, \ZZ) = H_1(G, \RR) \cap C_1(G, \ZZ).$$
	Recall that we denote by $\ell\colon E(G) \to \RR_+$ the length function on $G$ giving $\Gamma$.
	
	Recall that a $1$-form $\psi$ on $G$ is a formal sum with integer coefficients $\sum \psi_{\vec e} d \vec e$ with $\vec e$ an oriented edge of $G$, where the value on the edge with the opposite orientation to $\vec e$ is required to be $- \psi_{\vec e}$. A $1$-form $\psi$ on $G$ is called harmonic, if for every vertex $v \in V(G)$ we have $\sum \psi_{\vec e} = 0$ where the sum is over all $\vec e \in \Star(v)$ oriented away from $v$. We have an integration pairing $\int$ between $1$-forms on $G$ and $C_1(G, \RR)$. For simplicity, we fix an orientation on each edge of $G$, and then
	\[
	\int_{e'} de=
	\begin{cases}
	\ell(e) \text{ if } e = e',\\
	0 \text{ otherwise},\\
	\end{cases}
	\]
	for $e, e' \in E(G)$.  Let $\Omega(\Gamma)$ denote the real vector space of harmonic $1$-forms on $\Gamma$ and $\Omega(\Gamma)^*$ its dual. The restriction of the integration pairing to $\Omega(\Gamma) \times H_1(G, \RR)$ is perfect, and hence induces a natural identification $$\mu \colon H_1(G, \RR) \xrightarrow{\sim} \Omega(\Gamma)^*.$$ 
	
	\subsubsection{Two lattices} The isomorphism $\mu$ gives rise to a natural lattice
	\[
	\Lambda = \mu(H_1(G, \ZZ)),
	\]
	of full rank in $\Omega(\Gamma)^*$. We will also consider another such lattice, as follows.
	
	The vector space $C_1(G,\RR)$ is self-dual with respect to the edge length pairing, given on basis elements by
	\[
	\left\langle e,e' \right\rangle_\ell=
	\begin{cases}
	\ell(e) \text{ if } e = e',\\
	0 \text{ otherwise}.\\
	\end{cases}
	\]
	We then define
	\[
	\Lambda^* = \left \{ \mu(v) \in \Omega(\Gamma)^* : \langle u,v \rangle_\ell \in \ZZ \mbox{ for all } u \in H_1(G, \ZZ) \right \}.
	\]
	Note that the isomorphism
	$
	\Lambda \cong H_1(G,\ZZ)$ induces an isomorphism of dual lattices $\Lambda^* \cong H^1(G, \ZZ).
	$ 
	Note that, while $\Lambda$ and $\Lambda^*$ are lattices of full rank in $\Omega(\Gamma)^*$, they need not be commensurable when the edge lengths are not rational.
	
	\begin{defn}
		Let 
		$
		\Sigma$ be the tropical Jacobian, that is, the real torus
		\[
		\Sigma = \Omega(\Gamma)^* / \Lambda,
		\]
		considered with the integer affine structure induced by $\Lambda^*$. 
	\end{defn}
	
	\subsubsection{Base points and Abel-Jacobi}
	
	Recall that we denote by $\Pic^d(\Gamma)$ the tropical Picard variety of degree $d$ divisors on $\Gamma$ modulo linear equivalence. Tropicalization takes algebraic principal divisors to tropical principal divisors and descends to
	\[
	\trop \colon \Pic^d(\cX)(K) \to \Pic^d(\Gamma),
	\]
	which agrees with retraction to the skeleton.  Our fixed choice of a basepoint in $\cX(K)$ induces identifications of $\Pic^d$ with $\Pic^0$, for both $\cX_K$ and $\Gamma$. 
	
	In close analogy with the Abel-Jacobi theory in algebraic geometry, one can identify the tropical Jacobian $\Sigma$ with the tropical Picard variety $\Pic^0(\Gamma)$ via the tropical Abel-Jacobi map. We briefly recall the basic outline of this construction and refer to \cite{MikhalkinZharkov08, BakerFaber11, BakerRabinoff15} for further details. If $x_1, \ldots, x_d$ are points in $\Gamma$, then we can choose paths $\xi_1, \ldots, \xi_d$ in $\Gamma$, where $\xi_i$ is a path from the basepoint to $x_i$.  Integrating harmonic forms along these paths gives an element of $\Omega(\Gamma)^*$.  Of course, these integrals depend on the choice of paths, but the differences are in $\Lambda = \mu( H_1(G,\ZZ))$.  In particular, the resulting map $\Gamma^d \to \Sigma$ is well-defined and induces an isomorphism $\Pic^0(\Gamma) \simeq \Sigma$. See \cite[\S 6]{MikhalkinZharkov08} and \cite{BakerFaber11}.  Finally, $\Pic^0(\Gamma)$ is naturally identified with the skeleton of $\Pic^0(\cX_K)^\an$, and the natural diagrams relating the algebraic and tropical Abel-Jacobi maps commute \cite{BakerRabinoff15}. 
	
	\subsubsection{Admissible polytopal decompositions}
	We next recall the definition of periodic polytopal decompositions of $\RR^n$ that are admissible with respect to the value group $|K^\times|$.  Note that this definition depends on two lattices: the first is the lattice that determines which polytopes are admissible.  For simplicity we fix this to be $\ZZ^n$.
	\begin{defn} \label{def:admissible}
		A polytope in $\RR^n$ is \emph{admissible} if it is an intersection of closed halfspaces $\{ u \in \RR^n : \langle u, v \rangle \geq a \}$ with $v \in \ZZ^n$ and $a$ in the value group $|K^\times|$.  
	\end{defn}
	
	If the affine span of each face of a polytope has rational slope, then it is defined by inequalities $\langle \cdot, v \rangle \geq a$ with $v \in \ZZ^n$. If the vertices of the polytope are in addition contained in $|K^\times|^n$, then also $a \in |K^\times|$. Indeed, each of the supporting hyperplanes $\{ u \in \RR^n : \langle u, v \rangle = a \}$ contains a vertex, and so $a$ is a sum of elements in $|K^\times|$ and thus itself in $|K^\times|$. In particular, any such polytope is admissible.
	
	\medskip
	
	The second lattice is the one with respect to which the decomposition is periodic, which we denote by $\Lambda \subset \RR^n$. We emphasize that $\Lambda$ need not be commensurable with the sublattice $\ZZ^n \subset \RR^n$ that controls admissibility.
	
	\begin{defn} \label{def:poldec}
		An \emph{admissible polytopal decomposition} of $\RR^n$ is a locally finite collection $\Delta = \{Q_i\}_i$ of admissible polytopes such that:
		\begin{enumerate}
			\item \label{def:poldec1} $\bigcup_i Q_i =\RR^n$
			\item \label{def:poldec2} Any face of $Q_i$ belongs to $\Delta$. 
			\item \label{def:poldec3} For $Q_i, Q_j \in \Delta$, $Q_i \cap Q_j$ is either empty or a common face of $Q_i$ and $Q_j$. 
		\end{enumerate}
		An admissible polytopal decomposition is called \emph{$\Lambda$-periodic}, if we have in addition:
		\begin{enumerate}[resume]
			\item \label{def:poldec4} For every $\lambda \in \Lambda$ and $Q_i \in \Delta$, $Q_i + \lambda \in \Delta$.
			\item \label{def:poldec5} there are finitely many classes of $Q_i$ modulo translations by elements of $\Lambda$.
		\end{enumerate}
	\end{defn}
	An admissible $\Lambda$-periodic polytopal decomposition $\Delta$ of $\RR^n$ induces a decomposition of the real torus $\RR^n/\Lambda$ as a union of closed subsets $$\RR^n/\Lambda = \bigcup \pi(Q_i),$$ where $Q_i$ ranges over a set of representatives of the finitely many $\Lambda$-orbits in $\Delta$, and $\pi\colon \RR^n \to \RR^n/\Lambda$ is the natural projection.  We refer to any decomposition of $\RR^n/\Lambda$ that arises in this way as an \emph{admissible polytopal decomposition}.

	\subsection{Decompositions into polystable types} \label{sec:stabledecomp}
	Recall that, given a divisor $\ud$ on the underlying graph $G$ and $S \subset E(G)$, the divisors of type $(S,\ud)$ on $\Gamma$ are those that can be expressed as $\ud + \underline e$, where $\underline e$ is the sum of one point in the interior of each edge in $S$.  Let $d = \deg (\ud) + |S|$.
	
	\begin{defn}
		Let $\theta^\circ_{S,\ud} \subset \Pic^d(\Gamma)$ be the set of classes of divisors of type $(S,\ud)$, and let $\theta_{S, \ud}$ be its closure.
	\end{defn}
	
	\begin{prop} \label{prop:polytopal}
		Choose an identification $\Omega(\Gamma)^* \cong \RR^n$ that induces $\Lambda^* \cong \ZZ^n$, and use the fixed basepoint to identify $\Sigma \cong \Pic^d(\Gamma)$.  Then
		$$
		\Delta_{\phi} = \left\{ \theta_{S,\ud} : (S, \ud) \mbox{ is } \phi \mbox{-polystable} \right\}
		$$
		is an admissible polytopal decomposition of $\Sigma$.
	\end{prop}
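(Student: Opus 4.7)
The plan is to lift each $\theta_{S,\ud}$ to an admissible polytope in $\Omega(\Gamma)^*$ and then verify the axioms of Definition~\ref{def:poldec} on the induced decomposition of $\Sigma$. I first parameterize divisors of type $(S,\ud)$ by $(t_e)_{e\in S}\in\prod_{e\in S}[0,\ell(e)]$, where $t_e$ is the arclength from a chosen endpoint of $e$ to the support point. After fixing a lift of $[\ud-d\cdot v']$ in $\Omega(\Gamma)^*$, the tropical Abel--Jacobi map gives an affine map $\phi_{S,\ud}\colon\prod_{e\in S}[0,\ell(e)]\to\Omega(\Gamma)^*$ whose linear part sends $(t_e)$ to $\sum_e t_e\,\omega_e^*$, where $\omega_e^*\in\Omega(\Gamma)^*$ reads off the slope of a harmonic form on the edge $e$. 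A short calculation via the integration pairing shows $\omega_e^*\in\Lambda^*$, so the image $\widetilde\theta_{S,\ud}$ is an integral zonotope translated by an element with coordinates in $|K^\times|$; it and all its $\Lambda$-translates are therefore admissible polytopes, and $\pi(\widetilde\theta_{S,\ud})=\theta_{S,\ud}$.

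Next, the covering axiom follows directly from Proposition~\ref{prop:unique-polystable}: every class in $\Sigma$ has a representative of polystable (resp.\ $v$-quasistable) type, so $\Sigma=\bigcup\theta_{S,\ud}$. Periodicity under $\Lambda$ holds because translating a representative by a principal divisor on $\Gamma$ preserves its type. Finiteness of orbits modulo $\Lambda$ follows from the finiteness of $\{S\subset E(G)\}$ together with the fact that, modulo integral principal divisors supported on $V(G)$, the multidegrees $\ud$ arising from semistable types satisfy the bounds of Lemma~\ref{lma:upper_bound} and hence lie in a bounded region of $\Div^d(G)$.

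The main obstacle is the face-intersection axiom. Geometrically a facet of $\widetilde\theta_{S,\ud}$ comes from $t_e=0$ or $t_e=\ell(e)$ for some $e\in S$: the support point on $e$ moves to an endpoint $w$, producing a divisor of type $(S\setminus\{e\},\ud+(w))$. The key claim is that this new type is again polystable (resp.\ $v$-quasistable); verifying this amounts to tracking which of the basic inequalities of Lemma~\ref{lma:upper_bound} transition between strict and equality across the facet, and the arguments of Lemma~\ref{lma:strictly_semistable} together with Lemma~\ref{lma:polystable_char} (in the polystable case), or the definition of $v$-quasistability (in the other), should suffice. The subtle point for $\Delta_{\ps}$, illustrated by the Example preceding the proposition, is that $\phi_{S,\ud}$ need not be injective on the interior of its domain, so $\widetilde\theta_{S,\ud}$ may project to a lower-dimensional image in $\Sigma$; to conclude that $\Delta_{\ps}$ is nevertheless a polytopal decomposition, one must check that any such degeneracy of $\phi_{S,\ud}$ is compatible with the $\Lambda$-action, which follows from the definition of linear equivalence on $\Gamma$ and the tropical Abel--Jacobi theorem of \cite{BakerRabinoff15}.
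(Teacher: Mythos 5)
Your setup---lifting each $\theta_{S,\ud}$ to the zonotope generated by the vectors $\int_{e}$, $e \in S$, translated by $\int_\xi$, and checking that its edge directions lie in $\Lambda^*$ and its vertices in $\Lambda^*\otimes_\ZZ|K^\times|$---is exactly the paper's first step, and your treatment of admissibility, $\Lambda$-periodicity, and finiteness of orbits is fine. Using Proposition~\ref{prop:unique-polystable} for the covering (and disjointness of the open strata) is legitimate and not circular, since that proposition has an independent algebro-geometric proof via properness; note, though, that the paper deliberately arranges the logic the other way, deriving Proposition~\ref{prop:unique-polystable} as a corollary of this one.

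The genuine gap is in the face axioms, which you correctly identify as the main obstacle but do not actually close. First, your description of the facets of $\widetilde\theta_{S,\ud}$ as loci where a \emph{single} $t_e$ hits $0$ or $\ell(e)$ is only correct when the vectors $\{\int_e : e\in S\}$ are linearly independent in $\Omega(\Gamma)^*$; in the dependent case a facet of the zonotope is cut out by a linear functional and forces \emph{several} $t_e$ to endpoints simultaneously, while setting a single $t_e$ to an endpoint generally produces a subset that is not a face at all. This dependent case is not exotic---it is precisely the strictly semistable situation, and the paper's own example (the two-vertex loop) shows both failures at once: there $Q_{\{e_1,e_2\},(0,0)}(\xi)$ is an interval, the locus $t_{e_1}=\ell(e_1)$ is not a face of it, and the degeneration $(\emptyset,(0,2))$ obtained by pushing points to $v_2$ is semistable but \emph{not} polystable, so your ``key claim'' is false as stated. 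Second, even with the facets correctly identified, you would still need to prove that they are again of polystable type and that two cells meet in a common face; ``the arguments of Lemma~\ref{lma:strictly_semistable} and Lemma~\ref{lma:polystable_char} should suffice'' is not an argument. The paper avoids all of this by recognizing each $Q_{S,\ud}(\xi)$ as the orthogonal projection (with respect to the edge-length pairing on $C_1(G,\RR)$) of the Delaunay cell $\xi+\sum_{e\in S}[0,e]$, identifying the polystable types as exactly those whose dual Voronoi cells meet the affine subspace $\Omega(\Gamma)^*+\psi$ determined by the Oda--Seshadri parameter $\underline q_H$ \cite[Proposition~6.5]{OdaSeshadri79}, and then invoking the general Namikawa-decomposition theorem \cite[Proposition~1.6]{OdaSeshadri79} to conclude that these projections form a polytopal decomposition. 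Some input of this strength is needed; your sketch does not supply a substitute for it.
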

	
	We will refer to $\Delta_{\phi}$ as the \emph{Namikawa decomposition} into $\phi$-polystable types. As the notation indicates, it depends on the numerical polarization $\phi$. In the case where all edge lengths are 1, it is an instance of the Namikawa decompositions for graphs studied in \cite[\S6]{OdaSeshadri79}.
	
	\begin{proof}[Proof of Proposition~\ref{prop:polytopal}]
		The conclusion does not depend on the choice of coordinates $\Omega(\Gamma)^* \cong \RR^n$, as long as it identifies $\Lambda^*$ with $\ZZ^n$.  We make such a choice by fixing a spanning tree for $G$.  Let $f_1, \ldots, f_n$ be the edges in the complement of this spanning tree.  Then
		$\big \{ \frac{1}{\ell(f_i)} \int_{f_i} : 1 \leq i \leq n \big \}$ is a basis for $\Lambda^*$, and we use these as our coordinates.
		
		We begin by verifying that each $\theta_{S, \ud}$ is the image of an admissible polytope in $\Omega(\Gamma)^*$.  Let $\xi \in C_1(G,\ZZ)$ be the $1$-chain associated to some choice of paths from the basepoint to the points of $\ud$.  Then $\theta_{S, \ud}$ is the image in $\Sigma$ of the polytope 
		\begin{equation} \label{eq:QSd}
			Q_{S, \ud}(\xi) = \bigg\{\int_\xi + \sum_{e_i \in S} \frac{t_i}{\ell(e_i)} \int_{e_i} \ : \ 0 \leq t_i \leq \ell(e_i)\bigg\},
		\end{equation}
		in $\Omega(\Gamma)^*$.  The edges of $Q_{S, \ud}(\xi)$ are spanned by the vectors $\frac{1}{\ell(e_i)}\int_{e_i}$, for $e_i \in S$, which are in $\Lambda^*$.  It follows that the affine span of each face has rational slope. Furthermore, each vertex of $Q_{S, \ud}(\xi)$ is an integer linear combination of the vectors $\int_{e_i}$, which lie in $\Lambda^* \otimes_\ZZ |K^\times|$. Hence, in our chosen identification $\Omega(\Gamma)^* \cong \RR^n$ the vertices of $Q_{S,\ud}(\xi)$ are in $|K^\times|^n$. By the discussion after Definition~\ref{def:admissible}, it follows that $Q_{S, \ud}(\xi)$ is admissible, as required.
		
		It remains to show that $\{ Q_{S, \ud}(\xi) : (S, \ud) \mbox{ is } \phi \mbox{-polystable} \}$ forms a $\Lambda$-periodic polytopal decomposition of $\Omega(\Gamma)^* \cong \RR^n$, i.e., that it satisfies properties (1)-(5) from Definition~\ref{def:poldec}.  In fact, this decomposition is an instance of the Namikawa decompositions of a subspace of a vector space with a lattice and a quadratic form, from \cite[\S1]{OdaSeshadri79}.  We briefly recall the original construction, mirroring their notation.  Consider the real vector space $E = C_1(G,\RR)$ with inner product given by the edge length pairing $\langle \cdot, \cdot \rangle_\ell$, and extend the lattice $\Lambda = \mu(H_1(G,\ZZ))$ to $\Lambda_E = C_1(G,\ZZ)$.  Note that the Delaunay and Voronoi decompositions of $E$ associated to $\Lambda_E$ and $\langle \cdot, \cdot \rangle_\ell$ are independent of the edge lengths.  By construction, each polytope $Q_{S, \ud}(\xi)$ is the orthogonal projection to $E' = \Omega(\Gamma)^* \cong H_1(G,\RR)$ of the Delaunay polytope $D_{S,\ud}(\xi) = \xi + \sum_{e \in S} [0,e]$.
		
		Now, consider the special case where $\ell = \mathbf{1}$, i.e., all edge lengths are equal to $1$. Using the identification of Remark~\ref{rmk:reduction to degree zero}, we may assume $d = 0$. In this case the numerical polarization $\phi$ is in $\partial(C_1(G,\RR))$ (cf. Remark~\ref{rmk:numerical polarization OS}), and we let $\psi$ be the unique element in the orthogonal complement of $E'$ with respect to $\langle \cdot,\cdot \rangle_{\mathbf 1}$ such that $\partial \psi = \phi$.  By \cite[Proposition~6.5]{OdaSeshadri79}, $E' + \psi$ meets the relative interior of the Voronoi cell dual to $D_{S, \ud} (\xi)$ if and only if $(S, \ud)$ is $\phi$-polystable.
		
		Finally, with $\psi \in C_1(G, \RR)$ fixed, we return to the case of arbitrary edge lengths.  Let $\psi'$ be the unique point in the intersection of $E' + \psi$ with the orthogonal complement of $E'$ with respect to $\langle \cdot,\cdot \rangle_\ell$.  By construction, $E' + \psi' = E'+ \psi$.  In particular, $E' + \psi'$ meets the relative interior of the Voronoi cell dual to $D_{S, \ud} (\xi)$ if and only if $(S, \ud)$ is $\phi$-polystable.  Therefore, applying \cite[Proposition~1.6]{OdaSeshadri79}\footnote{In \cite[\S1]{OdaSeshadri79}, it is assumed that the projection of the lattice in $E$ to the subspace $E'$ is again a lattice, which is not satisfied in our setting, for arbitrary $\ell$.  However, as explained in \cite[p.~125]{Oda13}, no such assumption is needed for the proof of the cited proposition.} to $E = C_1(G,\RR)$, with respect to the inner product $\langle \cdot, \cdot \rangle_\ell$ and the parameter $\psi'$, we see that the orthogonal projections $Q_{S,\ud}(\xi)$ of these Delaunay polytopes form a $\Lambda$-periodic polytopal decomposition of $E' = \Omega(\Gamma)^*$, as required.
	\end{proof}

	\begin{rmk}
		See \cite[\S8]{OdaSeshadri79}, \cite[Example 1.4]{ABKS}, and \cite[\S5.2]{MolchoWise18} for explicit examples of Namikawa decompositions $\Delta_{\phi}$.  Note also that the existence and uniqueness result stated in Proposition~\ref{prop:unique-polystable}, and proved there using algebraic geometry, is an immediate consequence of Proposition~\ref{prop:polytopal}.
	\end{rmk}
	
	Recall that we discussed specialization of numerical polarizations in Section~\ref{sec:varying polarization}. 
	
	\begin{cor}\label{cor:refinement}
		Suppose the numerical polarization $\phi$ specializes to $\phi'$. Then the Namikawa decomposition into $\phi$-polystable types $\Delta_{\phi}$ is a refinement of the Namikawa decomposition into $\phi'$-polystable types $\Delta_{\phi'}$.
	\end{cor}
	
	\begin{proof}
		As explained in the proof of Proposition~\ref{prop:polytopal}, $\Delta_{\phi}$ and $\Delta_{\phi'}$ are instances of Namikawa decompositions and the claim then is \cite[Proposition 2.3 (iv)]{OdaSeshadri79}. 
	\end{proof}
	
	\subsubsection{Toric charts} \label{sec:toriccharts}
	
	Choose coordinates $\Omega(\Gamma)^* \cong \RR^n$ so that $\Lambda^*$ is identified with $\ZZ^n$.  Let $\Delta$ be an admissible $\Lambda$-periodic polytopal decomposition of $\Omega(\Gamma)^*$.  Admissibility ensures that, for each vertex $v$ in $\Delta$ the local fan $\Star(v)$ is rational.  Then we can associate to $\Delta$ a stable toric variety $Y_\Delta$, whose irreducible components correspond to vertices of $\Delta$, as follows.
	
	Since the fan $\Star(v)$ is rational (with respect to $\ZZ^n = \Lambda^*$), it gives rise to a normal toric variety $Y_v$, with dense torus $T = \Lambda^* \otimes_\ZZ k^*$.  
	Faces of dimension $k$ in $\Delta$ that contain $v$ correspond to torus fixed subsets of codimension $k$ in $Y_v$. We obtain $Y_\Delta$ by gluing the $Y_v$ along such torus fixed subsets according to the containment relations of $\Delta$. This stable toric variety $Y_\Delta$ is locally of finite type, and comes equipped with a $\Lambda$-action, induced by the action of $\Lambda$ on $\Omega(\Gamma)^*$ via translation.  This lattice action commutes with the torus action, and $Y_\Delta$ has only finitely many $(\Lambda \times T)$-orbits.  In particular, we have such a $\Lambda$-periodic toric variety associated to
	\[
	\widetilde \Delta_{\phi} = \left \{ Q_{S, \ud}(\xi) : (S, \ud) \mbox{ is } \phi \mbox{-polystable} \right \},
	\]
	where $Q_{S,\ud}(\xi)$ is the polytope defined in \eqref{eq:QSd}.

	Next, recall that the generalized Jacobian of $X$ parametrizes line bundles with degree zero on every component, and fits into a short exact sequence
	\begin{equation} \label{eq:Jacnu}
		1 \to H^1(G, k^*) \to \Jac(X) \to \Jac(X^\nu) \to 1.
	\end{equation}
	As discussed above, the edge length pairing induces an identification  $\Lambda^* \cong H^1(G, \ZZ)$. Hence we may identify the torus $T$ with $H^1(G, k^*)$ and set 
	\[
	E_v = Y_v \times_{T} \Jac(X) \mbox{ \ and \ } E_\Delta = Y_\Delta \times_{T} \Jac(X).
	\]
	Note that $E_\Delta$ carries a natural action of $\Lambda$.  The quotient is semi-toroidal, of finite type, and has only finitely many $\Jac(X)$-orbits.

	\begin{prop} \label{prop:specialquotient}
		There is a natural isomorphism identifying the compactified Jacobian of the special fiber $X$ with the quotient
		\[
		E_{\widetilde \Delta_{\phi}}/\Lambda \cong \overline J_X(\phi).
		\]
	\end{prop}
	
	\begin{proof}
		The normal fans of the polytopes $Q_{S, \ud}(\xi)$, with respect to the lattice $\Lambda^* = H^1(G,\ZZ)$, are independent of the edge lengths, as are the incidence relations among them.  It follows that $E_{\widetilde \Delta_{\phi}}$ is independent of the edge lengths.  We may therefore assume all edge lengths are 1. As before, by the identification of Remark~\ref{rmk:reduction to degree zero} we may assume $d = 0$. In this case, the quotient $E_{\widetilde \Delta_{\phi}}/\Lambda$ is the compactified Jacobian for the numerical polarization $\phi$, by \cite[Theorem 13.2]{OdaSeshadri79}. 
	\end{proof}

	\subsection{Example: Degree $d = g-1$ and the tropical Theta divisor} \label{sec:break}
	
	Let $\phi^{H, g-1}$ be the numerical polarization of degree $g -1$ that is induced by an ample line bundle $H$ as in Remark~\ref{rem:slope stability}.
	As observed by Alexeev \cite{Alexeev04}, $\phi^{H, g-1}$ does not depend on $H$ in this case; inserting $d + 1 - g = 0$ in \eqref{eq:slope inequality} gives  \[
	\left(\phi^{H, g-1}\right)_v = g(X_v) - 1 + \frac{|X_v \cap X_v^c|}{2}.\]
	
	For any subcurve $Y \subset X$ and set of nodes $S \subset E$ we have
	\begin{equation} \label{eq:genus_partial_normalization}
		g(Y_S^\nu) - 1  = \left(\sum_{X_v \subset Y} g(X_v) - 1 + \frac{|X_v \cap X_v^c \cap Y^{\mathrm{sing}}|}{2}\right) - |Y^{\mathrm{sing}} \cap S|, 
	\end{equation}
	where $Y_S^\nu$ is the normalization of $Y$ at nodes in $S \cap Y^{\mathrm{sing}}$. Thus we get by the basic lower bound \eqref{eq:lower bound} that a type $(S, \ud)$ is $\phi^{H, g-1}$-semistable if and only if 
	\begin{equation} \label{eq:lower bound g - 1}
		\sum_{X_v \subset Y} \ud_v \geq g(Y_S^{\nu}) - 1
	\end{equation}
	for every subcurve $Y \subset X$. The type is $\phi^{H, g-1}$-stable if and only if the above inequality is strict for every proper subcurve. The multidegrees that are $\phi^{H, g-1}$-semistable and $\phi^{H, g-1}$-stable are closely related to orientations on the dual graph, as we now recall.
	
\begin{defn} \label{def:orientable divisor}
    Let $O$ be an orientation of the edges of $G$. Then $\md_O$ is the multidegree on $G$ with value 
    \begin{equation} \label{eq: orientable divisor}
        \mathrm{indeg}_v(O) - 1 + g(X_v)
    \end{equation}
	at a vertex $v \in V(G)$. Here $\mathrm{indeg}_v(O)$ denotes the number of edges that are oriented towards $v$ in $O$. 
\end{defn} 

\noindent A multidegree is called \emph{orientable}, if it is of the form $\md_O$ for some orientation $O$. More generally, a  divisor $\D$ on the metric graph $\Gamma$ is called orientable if it is given by an orientation on the underlying combinatorial graph with vertex set $V' = V(G) \cup \supp(\D)$. Any orientable multidegree has degree $g - 1$.  The following is \cite[Proposition 3.6]{Alexeev04}:
	
	\begin{lma}\label{lma: Hakimi}
		A multidegree $\md$ is $\phi^{H, g-1}$-semistable if and only if it is orientable. It is strictly $\phi^{H, g-1}$-semistable along a subcurve $Y$ if and only if all edges corresponding to nodes in $Y \cap Y^c$ are oriented away from the subgraph of $G$ corresponding to $Y$.
	\end{lma}
	
	\noindent A set of edges corresponding to $Y \cap Y^c$ as in the second statement of Lemma~\ref{lma: Hakimi} is called a \emph{directed cut} of $O$.

	\medskip
	
	To describe the Namikawa decomposition of $\Pic^{g - 1}(\Gamma)$ into $\phi^{H, g-1}$-polystable types, we need one more definition: Recall that a divisor class on $\Gamma$ is called effective if it contains a divisor with non-negative coefficients. We write $\underline g$ for the divisor on $G$ with value $g(X_v^\nu)$ on a vertex $v$ and define the \emph{tropical Theta divisor} as
	\[ \Theta^{\trop} = \Big \{[\D] \in \Pic^{g - 1}(\Gamma)\, : \, \left [\D - \underline g \right ] \mbox{ is effective} \Big \}. \]
	In particular, if $X$ is a union of rational curves, i.e., if $\underline g = \underline 0$, then $\Theta^{\trop}$ coincides with the effective locus in $\Pic^{g-1}(\Gamma)$.
	
	\begin{prop} \label{prop:tropical theta}
		There is a unique maximal dimensional cell $\Delta_{(S_0, \ud_0)}$ in the Namikawa decomposition $\Delta_{\phi^{H, g-1}}$ and \[\Delta_{(S_0, \ud_0)} = \Pic^{g-1}(\Gamma) \smallsetminus \Theta^{\trop}.\]
	\end{prop}
	
	\begin{proof}
		We first reduce to the case that $\underline g$ is trivial. Consider the translation via $\underline g$ \[t_{\underline g}\colon \Pic^{g -1}(\Gamma) \to \Pic^{b_1(G) - 1}(\Gamma), \, \D \mapsto \D - \underline g, \] where $b_1(G)$ denotes the first Betti number of $G$. Let $G'$ and $\Gamma'$ be obtained from $G$ and $\Gamma$ by setting all weights on vertices to zero. Their genus is $b_1(G)$. 
		
		Suppose $O$ is an orientation on $\Gamma$ with associated divisor $\D = \md_O$. Then we may view $O$ also as an orientation on $\Gamma'$ whose associated divisor by definition is $t_{\underline g}(\D)$, see \eqref{eq: orientable divisor}. We identify $\Pic^{g-1}(\Gamma)$ and $\Pic^{b_1(G)-1}(\Gamma')$ via $t_{\underline g}$ and the natural identification $\Pic^{b_1(G)-1}(\Gamma) \simeq \Pic^{b_1(G)-1}(\Gamma')$. In particular, Lemmas~\ref{lma:polystable_char} and \ref{lma: Hakimi} together with the above discussion imply that the Namikawa decomposition of $\Pic^{g -1}(\Gamma)$ with respect to $\phi^{H,g-1}$ is identified with the Namikawa decomposition of $\Pic^{b_1(G) -1}(\Gamma')$ with respect to $\phi^{H,b_1(G) -1}$, viewed as a numerical polarization on $G'$. Thus it suffices to show the claim for $G'$ and $\Gamma'$.

		So assume that $g(X_v^\nu) = 0$ for all $v \in V(G)$. By \cite[Proposition 7.8]{OdaSeshadri79}, there is a unique $g$-dimensional cell $\Delta_{(S_0, \ud_0)}$ in $\Delta_{\phi^{H, g-1}}$. Concretely, $S_0 = E(G)$ and $\ud_0$ has value $-1$ on each vertex. We need to show that a divisor class of degree $g-1$ is effective if and only if its $\phi^{H, g-1}$-polystable representatives are not of type $(S_0, \ud_0)$.
		
		Suppose first, that $\D$ is a divisor of type $(S, \ud)$ on $\Gamma$ with $(S, \ud)$ a $\phi^{H, g-1}$-polystable type that is different from $(S_0, \ud_0)$. By Lemmas~\ref{lma:polystable_char} and \ref{lma: Hakimi} we have $\md = \md_O$, with $O$ an orientation on $G - S$ that contains no directed cuts. Since $S_0 \neq S$, we have $E(G - S) \neq \emptyset$ and we may choose $v$ adjacent to an edge of $G - S$. Then not all adjacent edges are oriented away from $v$ in $O$, since they would otherwise form a directed cut. Thus $\md_v = \mathrm{indeg}_v(O) - 1 \geq 0$, and thus also $\D_v \geq 0$. 
		
		We extend $O$ to an orientation $\widehat O$ on $\Gamma$ giving $\D$ as follows: each edge not contained in $S$ is oriented as in $O$; for each edge $e$ in $S$, $\D$ contains a unique point $p$ in its interior and the two intervals of $e \smallsetminus p$ are oriented both towards $p$. By \cite[Theorem 4.18]{ABKS} \[\D = \md_{\widehat O} \sim \md_{O'}\] with $O'$ $v$-connected, that is, each point can be reached via a directed path from $v$. In particular, $\md_{O'}$ has value $\mathrm{indeg}_{v'}(O') - 1 \geq 0$ for all $v' \neq v$. On the other hand, the construction of $O'$ in the proof of \cite[Theorem 4.18]{ABKS} affects only directed cuts. By what we said above, at least one edge $e$ of $G - S$ is oriented towards $v$ and $e$ is not contained in a directed cut. It follows that $e$ is still oriented towards $v$ in $O'$. Hence $\md_{O'}$ is non-negative also at $v$ and thus effective.
		
		It remains to show that if $\D$ is of type $(S_0, \ud_0)$, then $\D$ is not effective. In this case, every edge of a vertex set that contains the support of $\D$ is contained in a directed cut of $\widehat O$, for $\D = \md_{\widehat O}$ constructed as above. It is well known, that $\D$ then is not equivalent to an effective divisor; for example, the argument in the proof of \cite[Theorem 4.4]{Backman17} works for metric graphs as well.
	\end{proof}
	
	\subsection{Example: Degree $d = g$ and the break divisor decomposition} The following is a straightforward adaptation of the definition of break divisors in \cite{MikhalkinZharkov08} to the case of vertex weighted graphs:
	\begin{defn}
		Let $G$ be a graph. A divisor $\ud$ on $G$ is called a break divisor if there is a spanning tree $G'$ of $G$ and a map $\varphi\colon E(G) \smallsetminus E(G') \to V(G)$ such that $\varphi(e)$ is adjacent to $e$ and 
		\[
		\ud \, = \, \underline g  \ + \sum_{e \in E(G) \smallsetminus E(G')} \varphi(e).
		\]
	\end{defn}

	Recall from Remark~\ref{rem:slope stability} that an ample line bundle $H$ on $X$ induces a numerical polarization $\phi^{H,d}$ in degree $d$. Recall furthermore, that $\phi^{H,d}$ is called general if the basic lower bound is not an integer for any subcurve. In this case, the notions of $\phi^{H,d}$-semistability, $\phi^{H,d}$-stability and $\phi^{H,d}$-polystability coincide. Finally, recall that we denote by $V\left(\phi^{H,d}\right)$ the set of numerical polarizations that induce the same notion of polystability as $\phi^{H,d}$. See Section~\ref{sec:varying polarization}.
	
	\begin{lma}\label{lma:polystable_break}
		Let $\phi^{H,g}$ be a numerical polarization of degree $g$ induced by an ample line bundle $H$ on $X$. Then 
		\begin{enumerate}
		    \item \label{lma:polystable_break1} $\phi^{H,g}$ is general,
		    \item \label{lma:polystable_break2} a type $(S, \ud)$ with $|S| + \deg(\md) = g$ is $\phi^{H,g}$-polystable if and only if $\ud$ is a break divisor on $G - S$, and
		    \item \label{lma:polystable_break3} $V(\phi^{H,g})$ and the corresponding compactified Jacobian $\overline J_{\cX}\left(\phi^{H,g}\right)$ do not depend on $H$.
		\end{enumerate}
	\end{lma}
	\begin{proof}
		Inserting the definition \eqref{eq:slope inequality} of the numerical polarization $\phi^{H,g}$ induced by $H$ in the basic lower bound \eqref{eq:lower bound} for a type $(S, \ud)$  and  a proper subcurve $Y \subset X$ becomes in degree $g$
		\begin{equation} \label{eq:upper_g}
			\sum_{X_v \subset Y} \ud_v \geq g(Y^\nu_{S}) - 1 + \frac{\deg (H|_Y)}{\deg (H)},
		\end{equation}
		where we used again \eqref{eq:genus_partial_normalization}. Since $H$ is ample, we have $0 < \frac{\deg (H|_{Y})}{\deg (H)} < 1$ and the right hand side of \eqref{eq:upper_g} is not an integer. Thus $\phi^{H,g}$ is a general numerical polarization, as claimed in \eqref{lma:polystable_break1}.
		
		It follows furthermore, that the type $(S, \ud)$ is $\phi^{H,g}$-polystable if and only if 
		\[
		\sum_{X_v \subset Y} \ud_v \geq g(Y^\nu_{S})
		\]
		for every proper subcurve $Y \subset X$.
		By \cite[Lemma 3.3 and Proposition 4.11]{ABKS} this is equivalent to $\ud$ being a break divisor on $G - S$. This shows \eqref{lma:polystable_break2}, from which~\eqref{lma:polystable_break3} immediately follows.
	\end{proof}
	\noindent In particular, the Namikawa decomposition of $\Pic^g(\Gamma)$ into $\phi^{H,g}$-polystable types coincides with the break divisor decomposition introduced in \cite{ABKS} for any $H$. 
	
	\subsubsection{Twisting by a section}
	
	Finally, we relate the two examples for $d= g$ and $d = g- 1$. Namely, combinatorial results of \cite{ABKS} give a close relation between the two compactified Jacobians via the notion of $v$-quasistability in the sense of Esteves. 
	
	As in Remark~\ref{rem:slope stability}, we denote by $\phi^{H,d}$ the numerical polarization encoding slope stability with respect to $H$. In addition, let $\phi^{H,d, v}$ denote the numerical polarization encoding $v$-quasistability with respect to $\phi^{H,d}$ as in Remark~\ref{rmk:v-quasistability}.
	
	\begin{prop} \label{prop:degreeg}
		Let $H$ be an ample line bundle on $X$ and
		$\sigma\colon \Spec(R) \to \cX$ be a section whose image intersects the central fiber in a smooth point on the irreducible component $X_v$. Then twisting by $\sigma$ induces an isomorphism
		\[
		\overline J_\cX\left(\phi^{H,g}\right) \simeq \overline J_\cX\left(\phi^{H, g-1, v}\right).
		\]
	\end{prop}
	\begin{proof}
		The section $\sigma$ gives a Cartier divisor of relative degree 1 on the total space $\cX$. 
		We claim that $\cF(-\sigma)$ is $v$-quasistable with respect to $\phi^{H, g-1}$, for $[\cF] \in \overline J_\cX(\phi^{H, g})$. Indeed, by Lemma~\ref{lma:polystable_break} and \cite[Lemma 3.3]{ABKS}, the multidegree $\md$ of $\cF(-\sigma)$ restricted to the central fiber is $v$-orientable, defined as in the proof of Proposition~\ref{prop:tropical theta}. In particular, any directed cut in the orientation giving $\md$ needs to be oriented away from $v$, and thus $\md$ is $v$-quasistable with respect to $\phi^{H,g-1}$ by Lemma~\ref{lma: Hakimi}.
		Hence, by the modular properties of $\overline J_\cX\left(\phi^{H,g}\right)$ and $\overline J_\cX\left(\phi^{H,g-1, v}\right)$, the map $[\cF] \mapsto [\cF(-\sigma)]$ gives a well-defined morphism $\overline J_\cX\left(\phi^{H,g}\right) \to \overline J_\cX\left(\phi^{H,g-1,v}\right)$.  A similar argument shows that $[\cF'] \mapsto [\cF'(\sigma)]$ gives an inverse morphism.
	\end{proof}
	
	\section{Mumford models} \label{sec:Mumford}
	
	In this final section, we prove the main results stated in the introduction by showing that each compactified Jacobian of $\cX$ is the Mumford model of $\Pic^d(\cX_K)^\an$ associated to the Namikawa decomposition of its skeleton $\Pic^d(\Gamma)$ into $\phi$-polystable types.
	We begin by briefly recalling Mumford's construction of formal models of abelian varieties via uniformization \cite{Mumford72b}, following the presentation of \cite[\S4]{Gubler10}, to which we refer the reader for details and further references.

	\subsection{Uniformization} \label{sec:uniformization}	Let $\cA$ be an abelian variety over $K$.  Then there is an analytic domain $A_1 \subset \cA^\an$ with a formal model $\mathfrak{A}_1$ whose special fiber is semiabelian (i.e., an extension of an abelian variety by a multiplicative torus), a formal abelian scheme $\mathfrak{B}$ with generic fiber $B$, and an exact sequence of analytic groups
	\begin{equation}\label{eq:Raynaud1}
		1 \to T_1 \to A_1 \xrightarrow{q_1} B \to 1,
	\end{equation}
	where $T_1$ is the maximal formal affinoid subtorus of $A_1$.  
	
	We identify $T_1$ with the compact subgroup of $T = (\mathbb{G}_m^n)^\an$ where all coordinates have valuation zero, and let $E = (A_1 \times T) / T_1$.  Then there is a canonical surjective \emph{uniformization map} of analytic groups $E \to \mathcal A^\an$ with discrete kernel $M$, and an exact sequence, called the Raynaud extension
	\begin{equation}\label{eq:Raynaud}
		1 \to T \to E \xrightarrow{q} B \to 1
	\end{equation}	
	
	The extension \eqref{eq:Raynaud1} is locally trivial.  Let $\mathfrak{B}_k$ be the special fiber of $\mathfrak B$.  We fix a cover by affine open subvarieties $U_\alpha \subset \mathfrak{B}_k$ and trivializations of the Raynaud extension over the corresponding affinoid subdomains $V_\alpha \subset B$.  
	
	The trivialization $q_1^{-1}(V_\alpha) \cong T_1 \times V_\alpha$ induces a trivialization $q^{-1}(V_\alpha) \cong T \times V_\alpha$, and we can pullback the coordinates $x_1, \ldots, x_n$ on $T$.  These are not globally well-defined on $E$, but $\val(x_i)$ is independent of the choice of local trivialization, giving
	\[
	\trop\colon E \to \RR^n.
	\]
	Tropicalization maps $M$, the kernel of $E \to \mathcal A^{\an}$, isomorphically onto a lattice $\Lambda \subset \RR^n$.
	
	\subsection{From polytopal decompositions to formal models}  \label{sec:Gublermodels}
	
	Recall that each affinoid analytic domain has a \emph{canonical model}, which is the formal spectrum of its ring of power bounded analytic functions.  Given a polytope $Q \subset \RR^n$ that is admissible in the sense of Definition~\ref{def:admissible}, the preimage of $Q$ under the tropicalization map $T \to \RR^n$ is an affinoid analytic domain, and its canonical model is the formal completion of an affine toric variety over the valuation ring; in the framework of \cite{Gubler13, GublerSoto15}, it is the formal completion $\mathfrak{U}_Q$ of the affine toric variety $U_Q$ corresponding to the cone over $Q \times \{1\}$ in $\RR^n \times \RR_{\geq 0}$.
	
	Now, consider an atlas $\{ U_\alpha : \alpha \in I \}$ of affine opens in $\mathfrak{B}_k$ such that the Raynaud extension is trivial over the corresponding affinoid generic fibers $V_\alpha \subset B$.  Then $$V_{Q,\alpha} = q^{-1}(V_\alpha) \cap \trop^{-1}(Q)$$ is an affinoid subdomain in $E$, and the trivialization identifies $V_{Q,\alpha}$ with the product of $V_\alpha$ with the preimage of $Q$ in $T$.
	
	Let $\mathfrak{U}_\alpha$ be the formal open subscheme of $\mathfrak{B}$ supported on $U_\alpha$.  Recall that a formal model of an affinoid domain is the canonical model if and only if its special fiber is reduced \cite[Proposition~3.13]{BPR16}.  Hence $\mathfrak{U}_\alpha$ is the canonical model of $V_\alpha$, and the canonical  model of $V_{Q, \alpha}$ is
	\[
	\mathfrak{U}_{Q, \alpha} = \mathfrak{U}_{\alpha} \times_{\Spf R} \mathfrak{U}_Q.
	\]  
	
	If $Q'$ is a face of an admissible polytope $Q$ and $U_\beta \subset U_\alpha$, then $\mathfrak{U}_{Q',\beta}$ is a formal affine open subvariety of $\mathfrak{U}_{Q,\alpha}$. 
	
	\begin{defn}
		Let $\Delta$ be an admissible polytopal decomposition of $\RR^n$.  Then the Gubler model $\mathfrak{U}_\Delta$ is the admissible formal scheme obtained from $\{\mathfrak{U}_{Q, \alpha} : Q \in \Delta \}$ by gluing along the inclusions $\mathfrak{U}_{Q', \beta} \subset \mathfrak{U}_{Q,\alpha}$ whenever $U_\beta \subset U_\alpha$ and $Q'$ is a face of $Q$.
	\end{defn}
	
	Above we used the choice of an atlas $U_\alpha$ that trivializes the Raynaud extensions. Since any two such choices have a common refinement the construction of the Gubler model does not depend on this choice.
	
	Let $U_{Q, \alpha}$ denote the special fiber of $\mathfrak{U}_{Q, \alpha}$.  Note that
	\begin{equation}\label{eq:cover}
		\left \{ U_{Q, \alpha} : Q \in \Delta \mbox{ and } \alpha \in I \right \} 
	\end{equation}
	is an affine open cover of the special fiber of $\mathfrak{U}_\Delta$.  
	
	By construction, $\mathfrak{U}_\Delta$ is a formal model of $\trop^{-1}(|\Delta|)$. In particular, if $|\Delta| = \RR^n$ then $\mathfrak{U}_\Delta$ is a formal model of $E$, which we denote $\mathfrak{E}_\Delta$.  If $\Delta$ is $\Lambda$-periodic then $M = \ker(E \to \mathcal A^{\an})$ acts naturally and freely on $\mathfrak{E}_\Delta$, and the quotient $$ \mathfrak{A}_\Delta = \mathfrak{E}_\Delta / M$$ is a formal model of $\mathcal{A}^\an$.  This gives a formal model of $\mathcal{A}^\an$ naturally associated to each admissible polytopal decomposition of its skeleton; such models are called \emph{Mumford models}. Here an admissible polytopal decomposition of the skeleton is by definition a periodic admissible decomposition of its universal cover $\RR^n$.
	If the Mumford model $\mathfrak{A}_\Delta$ is projective, then it is uniquely algebraizable, as the formal completion of a projective scheme $\mathcal{A}_\Delta$ over $\Spec R$.  We then refer to the algebraic integral model $\mathcal{A}_\Delta$ as an \emph{algebraic Mumford model} of $\mathcal A$.
	
	\begin{thm} \label{thm:Mumford}
		Let $\cX$ be a semistable curve over the valuation ring $R$ with skeleton $\Gamma$, and let $\mathcal A = \Pic^d(\cX_K)$.  Let $\Delta_{\phi}$ be the Namikawa decompositions of the skeleton $\Pic^d(\Gamma) \subset \Pic^d(\cX_K)^\an$ into $\phi$-polystable types. Then the Mumford model  $\mathfrak{A}_{\Delta_{\phi}}$ is projective and the compactified Jacobian $\overline J_\cX(\phi)$ is naturally isomorphic to the algebraic Mumford model $\mathcal {A}_{\Delta_{\phi}}$.
	\end{thm}
	
	\begin{proof}
		As in \S\ref{sec:stabledecomp}, we consider the decomposition of $\Omega(\Gamma)^*$ by polytopes corresponding to $\phi$-polystable types 
		$$\widetilde \Delta_{\phi} = \left \{ Q_{S, \ud}(\xi) : (S, \ud) \mbox{ is } \phi \mbox{-polystable} \right \}.$$
		Recall that the decomposition is periodic with respect to the lattice $\Lambda = \mu(H_1(G,\ZZ)$. 
		
		Let $\mathcal{A} = \Pic^d(\mathcal{X}_K)$, and let $A = \mathcal{A}^\an$ with uniformization $E \to A$.  As in \S\ref{sec:toriccharts}, $\widetilde \Delta_{\phi}$ gives rise to a locally finite type variety $E_{\widetilde \Delta_{\phi}}$ over the residue field, with a free $\Lambda$-action. By construction $E_{\widetilde \Delta_{\phi}}$ is the special fiber of the formal model $\mathfrak E = \mathfrak{E}_{\widetilde \Delta_{\phi}}$.  The quotient $E_{\widetilde \Delta_{\phi}}/\Lambda$ is isomorphic to the special fiber $\overline J_{X}(\phi)$ of the compactified Jacobian $\overline J_{\mathcal X}(\phi)$, by Proposition~\ref{prop:specialquotient}.  This isomorphism gives rise to a second formal scheme $\mathfrak{E}'$ whose underlying topological space is $E_{\widetilde \Delta_{\phi}}$, by pulling back the structure sheaf on the formal completion of the compactified Jacobian $\overline J_{\mathcal X}(\phi)$ along its special fiber.  We will show that these two formal schemes $\mathfrak E$ and $\mathfrak E'$ are canonically isomorphic.
		
		As in \S\ref{sec:uniformization}, we have a formal abelian scheme $\mathfrak{B}$ with generic fiber $B$ and an exact sequence of analytic groups $1 \to T \to E \to B \to 1$.  In our case, the special fiber $\mathfrak{B}_k$ is the Jacobian of the normalized special fiber $X^\nu$, by \eqref{eq:Jacnu}.  We fix an affine open cover $\{ U_\alpha : \alpha \in I \}$ of $\mathfrak B_k$ over which the Raynaud extension is trivial.  
		
		Consider the cover of $E_{\widetilde \Delta_{\phi}}$ by affine opens $\{U_{Q, \alpha} : Q \in \Delta_{\phi} \mbox{ and } \alpha \in I \}$, as in \eqref{eq:cover}. Fix $Q = Q_{S,\ud}(\xi)$ and $\alpha$, and set
		$
		U = U_{Q, \alpha}.
		$
		Note that the special fibers $\mathfrak{E}|_{U}$ and $\mathfrak{E'}|_U$ are reduced (for $\mathfrak{E'}$, this follows from 
		\cite[Theorem~4.3.7]{HuybrechtsLehn97} and \cite[Theorem 11.4(3)]{OdaSeshadri79}).  Also, since $U$ is affine, the respective generic fibers $V$ and $V'$ are affinoid.  By \cite[Proposition~3.13]{BPR16}, it follows that $\mathfrak{E}|_{U}$ and $\mathfrak{E'}|_{U}$ are the canonical models of $V$ and $V'$, respectively.  We claim that $V = V'$.
		
		Recall that $V$ is characterized in terms of uniformization and tropicalization by
		$$
		V = q^{-1}(V_\alpha) \cap \trop^{-1}(Q).
		$$
		Note that $Q = Q_{S, \ud}(\xi)$ is the disjoint union of the relative interiors of its faces.
		Therefore $V$ decomposes as a disjoint union $$V = \bigsqcup_{Q' \preceq Q} (q^{-1}(V_\alpha) \cap \trop^{-1}((Q')^\circ).$$ Each face $Q'$ is of the form $Q' = Q_{S', \ud'}(\xi')$, and the relative interior $(Q')^\circ$ maps bijectively onto its image in $\Pic^d(\Gamma)$.  By construction (and with the path $\xi'$ fixed), the points of $\trop^{-1}((Q')^\circ)$ correspond naturally and bijectively to isomorphism classes of line bundles $\mathcal{O}(D)$ on $\cX_K$ such that $\trop(D)$ is equivalent to a divisor of type $(S', \ud')$ on $\Gamma$.
		
		Similarly, $V'$ decomposes as a disjoint union over faces $Q' = Q_{S', \ud'}(\xi')$ of $Q$
		\[
		V' = \bigsqcup_{Q' \preceq Q} V^\circ_{Q'} 
		\]
		where $V^\circ_{Q'}$ is identified with isomorphism classes of line bundles on $\cX_K$ that are in $q^{-1}(V_\alpha)$ and extend to torsion free sheaves of type $(S',\ud')$ on $\cX$.  Now Proposition~\ref{prop:extension-tf} tells us that a line bundle $\mathcal{O}(D)$ on $\cX_K$ extends to a torsion free sheaf of type $(S', \ud')$ on $\cX$ if and only if $\trop(D)$ is equivalent to a divisor of type $(S', \ud')$.  It follows that $V^\circ_{Q'} = q^{-1}(V_\alpha) \cap \trop^{-1}((Q')^\circ)$, and hence $V = V'$, as claimed.
		
		We have shown, for each open set $U$ in our atlas $\{U_{Q, \alpha} \}$, that $\mathfrak{E}|_U$ and $\mathfrak{E'}|_U$ are the canonical models of the same affinoid $V = V'$, and hence they are canonically isomorphic.  Gluing these isomorphisms over all open sets in the atlas gives a canonical isomorphism $\mathfrak E \cong \mathfrak E'$.
		This isomorphism is $\Lambda$-equivariant and descends to an isomorphism between the formal Mumford model $\mathfrak{A}_{\Delta_{\phi}} = \mathfrak E/ \Lambda$ and $\mathfrak{E'}/\Lambda$, which is the formal completion of the compactified Jacobian $\overline J_{\mathcal X}(\phi)$ along its special fiber.  Finally, since $\overline J_{\mathcal X}(\phi)$ is projective, this formal completion is uniquely algebraizable, and we conclude that the isomorphism of formal models is induced by an isomorphism of algebraic integral models $\mathcal{A}_{\Delta_{\phi}} \cong \overline J_{\mathcal X}(\phi)$.	
	\end{proof}
	\bibliographystyle{amsalpha}
	\bibliography{math}
\end{document}